 \newtheorem{theorem}{Theorem}[section]
\newtheorem{definition}[theorem]{Definition}
\newtheorem{proposition}[theorem]{Proposition}
\newtheorem{lemma}[theorem]{Lemma}
\newtheorem{corollary}[theorem]{Corollary}
\theoremstyle{definition}
\newtheorem{remark}[theorem]{Remark}
\newtheorem{example}[theorem]{Example}
\def\R{\mathbb{R}}
\def\1{\mathbf{1}}
\def\<{\langle}
\def\>{\rangle}
\DeclareMathOperator{\lk}{lk}
\DeclareMathOperator{\Star}{Star}
\DeclareMathOperator{\supp}{supp}
\newcommand\nullset\varnothing
\def\maketag@@@#1{\hbox{\m@th\normalfont\normalsize#1}}
\begin{document}

\title[Resolutions of local face modules]{Resolutions of local face modules, functoriality, and vanishing of local $h$-vectors}           
    
\author{Matt Larson, Sam Payne, and Alan Stapledon}

\address{Stanford U. Department of Mathematics, 450 Jane Stanford Way, Stanford, CA 94305}
\email{mwlarson@stanford.edu}

\address{UT Department of Mathematics, 2515 Speedway, RLM 8.100, Austin, TX 78712}
\email{sampayne@utexas.edu}

\address{Sydney Mathematics Research Institute, L4.42, Quadrangle A14, University of Sydney, NSW 2006, Australia}
\email{astapldn@gmail.com}

\begin{abstract}
We study the local face modules of triangulations of simplices, i.e., the modules over face rings whose Hilbert functions are local $h$-vectors. In particular, we give resolutions of these modules by subcomplexes of Koszul complexes as well as  functorial maps between modules induced by inclusions of faces.  As applications, we prove a new monotonicity result for local $h$-vectors and new results on the structure of faces in triangulations with vanishing local $h$-vectors.
\end{abstract}
 
\maketitle

\vspace{-20 pt}

\section{Introduction}

In this paper, we study the modules over face rings, introduced by Athanasiadis and Stanley, whose Hilbert functions are the relative local $h$-vectors of quasi-geometric homology triangulations of simplices, a broad class of formal subdivisions that includes all geometric triangulations and is natural from the point of view of combinatorial commutative algebra.  See Section 2.1 for the precise definition and further references.

Fix an infinite field $k$.  Let $\sigma \colon \Gamma \to 2^V$ be a quasi-geometric homology triangulation of a simplex, and let $E$ be a face of $\Gamma$. Say that a face $G \in \Gamma$ is \emph{interior} if $\sigma(G) = V$, and let $I$ be the ideal in the face ring $k[\lk_\Gamma(E)]$ generated by the faces that are interior relative to $E$, i.e., 
\[
 I = (x^F : F \sqcup E \mbox{ is interior} ).
\]
Let $d = |V|-|E|$, which is the Krull dimension of $k[\lk_\Gamma(E)]$, and let $\theta_1, \ldots, \theta_{d}$ be a special l.s.o.p., as in \cite{Stanley92, Athanasiadis12b}. See also \S\ref{ss:sr}, where we recall the definition and construction of special l.s.o.p.s.

\begin{definition}
The \emph{local face module} $L(\Gamma,E)$ is the image of $I$ in $k[\lk_\Gamma(E)]/(\theta_1, \ldots, \theta_d)$.
\end{definition}

\noindent Note that $L(\Gamma,E)$ is a finite dimensional graded $k$-vector space. The \emph{local $h$-vector} is its Hilbert function: 
\[
\ell(\Gamma, E) := (\ell_0, \ldots, \ell_{d}), \quad \quad \mbox{ where } \ \ell_i := \dim L(\Gamma,E)_i.
\]
The local face module $L(\Gamma,E)$ depends on the choice of a special l.s.o.p., but $\ell(\Gamma,E)$ is an invariant of the triangulation with the symmetry $\ell_i = \ell_{d- i}$. See \S\ref{sec:triangulations} for details and references. In the past few years, there has been significant research activity on the combinatorics of local $h$-vectors and relations to intersection homology \cite{Athanasiadis16, KatzStapledon16, Stapledon17, deCataldoMiglioriniMustata18}.  Recent advances include a proof that every non-negative integer vector satisfying $\ell_0 = 0$ and $\ell_i = \ell_{d-i}$ is the local $h$-vector of a quasi-geometric triangulation for $E = \emptyset$ \cite{JKMS}, and a relative hard Lefschetz theorem that yields unimodality of local $h$-vectors for regular subdivisions in a more general setting (for regular nonsimplicial polyhedral subdivisions that are not necessarily rational) \cite{Karu19}.

\medskip

Here, we investigate the local face modules $L(\Gamma, E)$ using methods of combinatorial commutative algebra. In particular, we describe natural combinatorial resolutions of these modules as well as natural maps of $k[\lk_\Gamma(E)]$-modules, $L(\Gamma,E) \to L(\Gamma, E')$, for $E \subset E'$.  Our first theorem gives explicit generators for the kernel of the natural map $I \to k[\lk_\Gamma(E)]/(\theta_1, \ldots, \theta_d)$. Moreover, we extend this to an exact sequence of graded $k[\lk_\Gamma(E)]$-modules in which each term is a direct sum of degree-shifted monomial ideals.

Label the vertices of the simplex $V = \{ v_1, \ldots, v_n \}$.  For a subset $U \subset V$, let $U^c := V \smallsetminus U$. After relabeling, we may assume that $\sigma(E)^c = \{v_1, \ldots, v_b\}$. Given $S \subset \{v_1, \ldots, v_d\}$, 
we define the ideal $I_S \subset k[\lk_\Gamma(E)]$ by
\begin{equation*}
I_S := ( x^F : \, \sigma(F \sqcup E)^c \subset S). 
\end{equation*}
Note that $I_{S'} \subset I_{S}$ for $S' \subset S$, and $I_S$ depends only on $S \cap \{v_1, \ldots, v_b\}$. For instance, $I_{\emptyset} = I$ and $I_{S} = k[\lk_{\Gamma}(E)]$ if $\{v_1, \ldots, v_b \} \subset S$. By the definition of a special l.s.o.p. (Definition~\ref{def:special}), after reordering, we may assume
\[
\supp(\theta_i) \subset \{ w \in \lk_\Gamma(E) : v_i \in \sigma(w) \}, 
\]
for $1 \leq i \leq b$.  As a consequence, for any $v_i \in S$, multiplication by $\theta_i$ induces a degree 1 map $\lambda_i \colon I_S \to I_{S \smallsetminus \{v_i\}}$.

\begin{theorem}\label{thm:resolution}
There is an exact sequence of graded $k[\lk_{\Gamma}(E)]$-modules
 $$0  \to k[\lk_{\Gamma}(E)][-d] \to \bigoplus_{ |S|  = d - 1 } I_{S}[-(d-1)] \to \dotsb  \to \bigoplus_{|S| = 1} I_{S}[-1] \to I \to L(\Gamma,E) \to 0,$$
where,  
for $S= \{v_{i_0}, \ldots, v_{i_k}\}$, with $i_0 < \cdots < i_k$, the differential restricted to $I_S$ is $\oplus_{j = 0}^k (-1)^j \lambda_{i_j}$.
\end{theorem}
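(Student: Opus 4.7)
The plan is to realise the displayed sequence as a subcomplex of the Koszul complex on the special l.s.o.p.\ and then to prove its acyclicity by a spectral-sequence argument. Write $R := k[\lk_\Gamma(E)]$ and $K_\bullet := K(\theta_1, \dotsc, \theta_d; R)$. The first step is to show that placing $I_S$ at position $S$ defines a subcomplex $C_\bullet \subseteq K_\bullet$: for $v_i \in S$ with $i \le b$, the support condition on $\theta_i$ forces multiplication by $\theta_i$ to carry $I_S$ into $I_{S \smallsetminus v_i}$, while for $i > b$ this is automatic since $I_S = I_{S \smallsetminus v_i}$. The Koszul signs then give $d^2 = 0$.

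Because $\Gamma$ is a quasi-geometric homology triangulation, $R$ is Cohen--Macaulay of dimension $d$, so $\theta_1, \dotsc, \theta_d$ is a regular sequence and $K_\bullet$ is acyclic in positive degrees with $H_0(K_\bullet) = R/(\theta_1, \dotsc, \theta_d)$. To analyse $C_\bullet$, I would split each $S$ as $S_1 \sqcup S_2$ with $S_1 \subseteq \{v_1, \dotsc, v_b\}$, $S_2 \subseteq \{v_{b+1}, \dotsc, v_d\}$, and use $I_S = I_{S_1}$ to realise $C_\bullet$ as the total complex of a double complex with horizontal differential from the $\lambda_i$ ($i \le b$) and vertical differential from multiplication by $\theta_i$ ($i > b$). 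Each column, at fixed $S_1$, is the ordinary Koszul complex $K(\theta_{b+1}, \dotsc, \theta_d; I_{S_1})$. Running the spectral sequence from the vertical direction and granting vertical acyclicity in positive degrees, the $E_1$ page collapses to the bottom row, where it becomes the analogue of the displayed complex in the case $d = b$ over the Cohen--Macaulay quotient $\bar R := R/(\theta_{b+1}, \dotsc, \theta_d)$ with $I_{S_1}$ replaced by $I_{S_1}/(\theta_{b+1}, \dotsc, \theta_d) I_{S_1}$. Induction on $d - b$ then reduces everything to the base case $d = b$.

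The main obstacle is vertical acyclicity together with the base case $d = b$. The ideals $I_{S_1}$ are generally not maximal Cohen--Macaulay $R$-modules (for instance $R/I$ is the $(d-1)$-dimensional face ring of the boundary of $\lk_\Gamma(E)$), so the standard l.s.o.p.\ argument does not apply directly. To handle both issues I would exploit the $k$-linear decomposition $R = \bigoplus_T R_T$, where $T$ ranges over subsets of $\{v_1, \dotsc, v_b\}$ and $R_T$ is the span of monomials $x^F$ with $\sigma(F \sqcup E)^c = T$: the ideals split as $I_{S_1} = \bigoplus_{T \subseteq S_1} R_T$, and multiplication by $\theta_i$ is triangular with respect to $T$, weakly decreasing $T$ in general and strictly removing $v_i$ when $i \le b$. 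Filtering $C_\bullet$ by this grading reduces vertical acyclicity and the $d = b$ base case to explicit Koszul-type computations on the graded pieces $R_T$, whose exactness can be extracted from the Cohen--Macaulayness of $R$; matching $H_0$ of each graded piece with the corresponding component of $L(\Gamma, E)$ then finishes the theorem.
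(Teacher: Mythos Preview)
Your approach is genuinely different from the paper's, which does not decompose $S$ but instead resolves each term of the Stanley--Athanasiadis complex $\bigoplus_{U \supset \sigma(E)} k[\lk_{\Gamma_U}(E)]/(\theta)$ by its own Koszul complex, producing a double complex indexed by pairs $(U,S)$ with $\sigma(E)\subset U\subset V$ and $S\subset S_U$. There the columns are exact because each $k[\lk_{\Gamma_U}(E)]$ is Cohen--Macaulay, the non-top rows are exact because in each fixed $\mathbb{N}^m$-multidegree they reduce to the augmented cochain complex of a simplex, and the top row is the Stanley--Athanasiadis exact sequence. The resolution is then recovered as the kernel complex $A^\bullet$ of the leftmost horizontal map, and $A^0 = L(\Gamma,E)$ holds automatically since $A^0 \subset R/(\theta)$ and $I$ surjects onto it.

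Your outline has two substantive gaps. First, the $R_T$-filtration step is a hope rather than an argument: filtering $C_\bullet$ by $|T|$ does concentrate the $E_1$-page on the row $q=0$ (each associated-graded piece at $T$ is, after a shift by $|T|$, the Koszul complex on the interior ideal of $k[\lk_{\Gamma_{T^c}}(E)]$, which is MCM), so the spectral sequence degenerates at $E_2$---but you then have to prove that the resulting $d_1$-complex on $E_1^{\bullet,0}$ is acyclic in positive degrees, and nothing you wrote addresses this. Second, even granting $H_j(C_\bullet)=0$ for $j>0$, you have not identified $H_0(C_\bullet)=I/\sum_i \theta_i I_{\{v_i\}}$ with $L(\Gamma,E)=I/(I\cap(\theta_1,\dots,\theta_d))$; that equality is exactly Corollary~\ref{cor:presentation}, which in the paper is a \emph{consequence} of the theorem, not an input. (Incidentally, your non-MCM example is backwards: from $0 \to I \to R \to k[\partial\lk_\Gamma(E)] \to 0$ and the depth lemma one sees that $I$ \emph{is} maximal Cohen--Macaulay, and a Mayer--Vietoris induction over the balls $\lk_{\Gamma_{V\smallsetminus v_i}}(E)$ gives the same for every $I_{S_1}$, so your vertical acyclicity is actually available without the $R_T$ detour.) The paper's kernel construction bypasses both gaps simultaneously.
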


\begin{corollary}\label{cor:presentation}
The kernel of the surjection $I \to L(\Gamma, E)$ is the ideal $J$ generated by 
 $$ \left \{ \theta_i \cdot x^{F} : F \sqcup E \mbox{ is interior } \right \}  \cup  \left \{ \theta_{j} \cdot x^G : \sigma(G \sqcup E) = \{v_j\}^c, \mbox{ for } 1 \leq j \leq b \right \}.$$
\end{corollary}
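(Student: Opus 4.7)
The plan is to deduce the corollary directly from Theorem~\ref{thm:resolution} by extracting the tail of the exact sequence there and unpacking the definition of $I_S$ for $|S|=1$. By exactness at $I$, the kernel of the surjection $I \to L(\Gamma,E)$ equals the image of the differential $\bigoplus_{|S|=1} I_S[-1] \to I$. Since this differential is built from the maps $\lambda_i$, which are multiplication by $\theta_i$, the image is precisely the ideal $\sum_{i=1}^d \theta_i \cdot I_{\{v_i\}}$.

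Next I would identify explicit generators of $I_{\{v_i\}}$ by splitting on whether $v_i \in \sigma(E)$. When $b < i \leq d$, we have $v_i \in \sigma(E) \subseteq \sigma(F \sqcup E)$ for every $F \in \lk_\Gamma(E)$, so $v_i \notin \sigma(F \sqcup E)^c$, and the containment $\sigma(F \sqcup E)^c \subset \{v_i\}$ forces $\sigma(F \sqcup E)^c = \emptyset$. Thus $I_{\{v_i\}} = I$, and this factor contributes only generators of the form $\theta_i \cdot x^F$ with $F \sqcup E$ interior. When $1 \leq i \leq b$, the condition $\sigma(F \sqcup E)^c \subset \{v_i\}$ holds if and only if $F \sqcup E$ is interior or $\sigma(F \sqcup E) = \{v_i\}^c$; this produces both types of generators listed in the corollary.

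Summing these contributions over $i = 1, \ldots, d$ recovers exactly the stated generating set for $J$. The argument is essentially a formal unwinding once Theorem~\ref{thm:resolution} is in hand, so the real obstacle lies in establishing that theorem rather than in this corollary. A minor bookkeeping point worth verifying is that each putative generator $\theta_j \cdot x^G$ with $\sigma(G \sqcup E) = \{v_j\}^c$ does indeed lie in $I$, so that the claim about the kernel is well-posed. This follows from the defining property of a special l.s.o.p., namely $\supp(\theta_j) \subset \{w \in \lk_\Gamma(E) : v_j \in \sigma(w)\}$, which ensures that every monomial $x^{w \sqcup G}$ appearing in $\theta_j \cdot x^G$ satisfies $\sigma(w \sqcup G \sqcup E) = V$, and therefore expresses $\theta_j \cdot x^G$ as a $k$-linear combination of interior generators of $I$.
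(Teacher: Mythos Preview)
Your argument is correct and is precisely the deduction the paper intends: the corollary is stated without proof, as an immediate consequence of Theorem~\ref{thm:resolution}, and your unpacking of the image of $\bigoplus_{|S|=1} I_S[-1] \to I$ via the case split on whether $v_i \in \sigma(E)$ is exactly the intended reading. The final paragraph verifying $\theta_j \cdot x^G \in I$ is a nice sanity check, though strictly speaking it is already subsumed by the paper's assertion that $\lambda_j$ maps $I_{\{v_j\}}$ into $I_\emptyset = I$.
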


We also construct maps between local face modules, as follows. For faces $E \subset E'$ in $\Gamma$, let $\Star(E' \smallsetminus E)$ denote the closed star of $E' \smallsetminus E$ in $\lk_\Gamma(E)$. 
We have a natural inclusion of complexes 
$\lk_\Gamma(E') \subset \lk_\Gamma(E)$. 



\begin{theorem}\label{thm:maps}
Let $E \subset E'$ be faces of $\Gamma$, with $$d = n - \vert E \vert, \quad  
d' = n - \vert E' \vert, \quad 
\mbox{ and } \quad b' = n - |\sigma(E')|.$$ Let $\{\theta_1, \dotsc, \theta_{d } \}$ be a special l.s.o.p. for $k[\lk_{\Gamma}(E)]$,  
and let 
$\theta_i' := \theta_i|_{\Star(E' \smallsetminus E)}$.  Then there is a 
unique homomorphism of graded 
$k$-algebras 
\[
\phi\colon k[\lk_{\Gamma}(E)]/(\theta_1, \dotsc, \theta_d) \to k[\lk_{\Gamma}(E')]/(k[\lk_\Gamma(E')]  \cap (\theta_1', \dotsc, \theta_{d}'))
\]
whose kernel contains $\{[x^F] : F \not \in \Star(E' \smallsetminus E)\}$ and satisfies $\phi(x^F) = x^F$ for all $F \in \lk_{\Gamma}(E')$. Moreover, there is a special l.s.o.p. $\zeta_1, \ldots, \zeta_{d'}$ for $k[\lk_\Gamma(E')]$ such that $(\zeta_1, \ldots, \zeta_{d'}) = k[\lk_\Gamma(E')] \cap (\theta'_1, \ldots, \theta'_{d})$ and, up to reordering, we have $\theta_i|_{\lk_\Gamma(E')} = \zeta_i$, for $1 \leq i \leq b'$.  With this choice of special l.s.o.p., $\phi(L(\Gamma,E)) \subset L(\Gamma,E')$. 
\end{theorem}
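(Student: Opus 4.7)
The plan is to define $\phi$ via restriction to $\Star(E' \smallsetminus E)$. Specifically, define the graded ring homomorphism $\rho\colon k[\lk_\Gamma(E)] \to k[\Star(E' \smallsetminus E)]$ by $\rho(x^F) = x^F$ if $F \in \Star(E' \smallsetminus E)$ and $0$ otherwise. Since $\rho(\theta_i) = \theta_i'$, the map $\rho$ descends to a map $k[\lk_\Gamma(E)]/(\theta_i) \to k[\Star(E' \smallsetminus E)]/(\theta_i')$. The map $\phi$ is then obtained by composing with an identification of the target with $k[\lk_\Gamma(E')]/(k[\lk_\Gamma(E')] \cap (\theta_i'))$ via the natural subring inclusion $k[\lk_\Gamma(E')] \hookrightarrow k[\Star(E' \smallsetminus E)]$. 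The kernel condition is built into the definition of $\rho$.

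The main obstacle is the following Key Lemma: for each $v \in E' \smallsetminus E$, there exists a linear element $g_v \in k[\lk_\Gamma(E')]$ with $x_v - g_v \in (\theta_1', \dotsc, \theta_d')$. This forces the subring inclusion to surject modulo $(\theta_i')$, hence yields the identification above. Decomposing $\theta_i' = \theta_i|_{\lk_\Gamma(E')} + L_i$ with $L_i = \sum_{v \in E' \smallsetminus E} c_{i,v} x_v$, the special support condition combined with $v_i \notin \sigma(E')$ for $i \leq b'$ forces $c_{i,v} = 0$ for those $i$. The lemma then reduces to showing that the matrix $B = (c_{i,v})_{b' < i \leq d,\ v \in E' \smallsetminus E}$ has full column rank. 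I would establish this by first observing that $\theta_1', \dotsc, \theta_d'$ remains an l.s.o.p.\ of $k[\Star(E' \smallsetminus E)]$, since every facet of $\Star(E' \smallsetminus E)$ contains $E' \smallsetminus E$ and is in particular a facet of $\lk_\Gamma(E)$, so the facet-wise l.s.o.p.\ criterion transfers. Then for any facet $F = (E' \smallsetminus E) \sqcup G$ (with $G$ a facet of $\lk_\Gamma(E')$), the $d \times d$ coefficient matrix for $\{\theta_i'|_F\}$ has the block form $\bigl(\begin{smallmatrix} 0 & A \\ B & C \end{smallmatrix}\bigr)$ with a $b' \times (d - d')$ zero block in the top-left; Laplace expansion along those columns forces some $(d - d') \times (d - d')$ minor of $B$ to be nonvanishing. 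Linear algebra then produces $\alpha_i^{(v)} \in k$ with $\sum_{i > b'} \alpha_i^{(v)} c_{i,v'} = \delta_{v,v'}$, and $g_v := -\sum_i \alpha_i^{(v)} \theta_i|_{\lk_\Gamma(E')}$ satisfies the lemma.

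With the Key Lemma in hand, I would construct the $\zeta_i$ as follows. Set $\zeta_i := \theta_i|_{\lk_\Gamma(E')}$ for $i \leq b'$; these are linearly independent (inherited from the $\theta_i'$) and satisfy the special support condition by restriction. Using $x_v \equiv g_v \pmod{(\theta_i')}$, one obtains an isomorphism $k[\Star(E' \smallsetminus E)]/(\theta_i') \cong k[\lk_\Gamma(E')]/(\eta_1, \dotsc, \eta_d)$ for the linear elements $\eta_i := \theta_i|_{\lk_\Gamma(E')} + \sum_v c_{i,v} g_v$, so $k[\lk_\Gamma(E')] \cap (\theta_i') = (\eta_1, \dotsc, \eta_d)$ is generated in degree $1$. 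A Hilbert function comparison in degree $1$ then yields that this ideal has degree-$1$ part of dimension exactly $d'$. Extending $\zeta_1, \dotsc, \zeta_{b'}$ to a basis of this degree-$1$ part produces $\zeta_{b'+1}, \dotsc, \zeta_{d'}$ generating the intersection ideal, and the fact that the quotient is Artinian of the same Krull dimension as $k[\lk_\Gamma(E')]$ forces them to be an l.s.o.p.

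Uniqueness of $\phi$ follows from the relations $\theta_i = 0$ in the source: the images $\phi([x_w])$ are fixed by the $k[\lk_\Gamma(E')]$-algebra structure for $w$ a vertex of $\lk_\Gamma(E')$ and by the kernel condition for $w \notin \Star(E' \smallsetminus E)$, so only $\phi([x_v])$ for $v \in E' \smallsetminus E$ remains undetermined; applying $\phi$ to the relations $\theta_i = 0$ for $b' < i \leq d$ yields a linear system in these unknowns with coefficient matrix $B$, whose full column rank forces $\phi([x_v]) = [g_v]$. Finally, for $\phi(L(\Gamma,E)) \subset L(\Gamma,E')$: if $F \in \lk_\Gamma(E')$ and $F \sqcup E$ is interior, then $\sigma(F \sqcup E') \supset \sigma(F \sqcup E) = V$, so $F \sqcup E'$ is interior and $\phi([x^F]) = [x^F] \in L(\Gamma,E')$.
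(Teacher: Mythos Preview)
Your approach matches the paper's: restrict to $\Star(E'\smallsetminus E)$, then identify the resulting quotient with a quotient of $k[\lk_\Gamma(E')]$. The paper packages the identification step into an abstract lemma (for a graded $k$-algebra $R$ with $R_0=k$ and an l.s.o.p.\ $\theta_1,\dots,\theta_n$ of $R[x_1,\dots,x_m]$, there is a unique graded $R$-algebra isomorphism $R[x_1,\dots,x_m]/(\theta_i)\cong R/(R\cap(\theta_i))$, and any basis of $R_1\cap(\theta_i)$ is an l.s.o.p.\ generating the intersection), whereas you reprove the same content concretely via the block-matrix rank argument. Your Key Lemma is exactly the surjectivity $(\theta_i')_1\twoheadrightarrow\langle x_v:v\in E'\smallsetminus E\rangle$ that the paper extracts from the l.s.o.p.\ condition. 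So the two routes coincide in substance, with the paper's abstraction trading your explicit linear algebra for a cleaner reusable statement.

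There is, however, a genuine gap in your final paragraph. The ideal $L(\Gamma,E)$ is generated by $[x^F]$ for \emph{all} $F\in\lk_\Gamma(E)$ with $F\sqcup E$ interior, not only for those $F\in\lk_\Gamma(E')$. When $F\in\Star(E'\smallsetminus E)$ meets $E'\smallsetminus E$, your sentence does not apply and $\phi([x^F])$ is not simply $[x^F]$. The fix, as in the paper, is to decompose $F=F_1\sqcup F_2$ with $F_1\subset E'\smallsetminus E$ and $F_2\in\lk_\Gamma(E')$; then $F_2\sqcup E'\supset F\sqcup E$ forces $F_2\sqcup E'$ to be interior, and $\phi([x^F])=\phi([x^{F_1}])\cdot[x^{F_2}]$ lies in $L(\Gamma,E')$ because $[x^{F_2}]$ does and $L(\Gamma,E')$ is an ideal in the target ring.
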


\begin{remark}
Theorem~\ref{thm:maps} may be viewed as a functoriality statement for local face modules. Start by fixing the special l.s.o.p. $\theta_1, \ldots, \theta_d$.  Then $L(\Gamma, E)$ is well-defined. For $E' \supset E$ the special l.s.o.p. $\zeta_1, \ldots, \zeta_{d'}$ depends on some choices, but the ideal that it generates does not, nor does the map $\phi \colon L(\Gamma,E) \to L(\Gamma,E')$.  Moreover, for $E'' \supset E'$, one readily checks that the maps $\phi' \colon L(\Gamma, E') \to L(\Gamma, E'')$ and $\phi '' \colon L(\Gamma,E) \to L(\Gamma, E'')$ are independent of all choices and satisfy $\phi'' = \phi' \circ \phi$.  Thus one obtains a functor from the poset of faces of $\Gamma$ that contain $E$ to graded vector spaces, given by $E' \mapsto L(\Gamma, E')$.
\end{remark}

We now give two applications of the above theorems. The first is a monotonicity property for local $h$-vectors.

\begin{theorem}\label{thm:increase} 
Let $E \subset E'$ be faces of $\Gamma$ such that $\sigma(E) = \sigma(E')$. Then $\ell (\Gamma, E) \geq \ell(\Gamma, E')$.
\end{theorem}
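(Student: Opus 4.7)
The strategy is to use Theorem~\ref{thm:maps} to produce a graded surjection $L(\Gamma,E) \twoheadrightarrow L(\Gamma,E')$, which immediately gives the componentwise inequality $\ell(\Gamma,E) \geq \ell(\Gamma,E')$ by comparing Hilbert functions in each degree.

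First I would apply Theorem~\ref{thm:maps} to the given inclusion $E \subset E'$, obtaining the graded $k[\lk_\Gamma(E')]$-algebra homomorphism
\[
\phi \colon k[\lk_\Gamma(E)]/(\theta_1,\ldots,\theta_d) \to k[\lk_\Gamma(E')]/(\zeta_1,\ldots,\zeta_{d'})
\]
such that $\phi(L(\Gamma,E)) \subset L(\Gamma,E')$. Since $\phi$ preserves degree, it is enough to show that the restricted map $L(\Gamma,E) \to L(\Gamma,E')$ is surjective; surjectivity then yields $\dim_k L(\Gamma,E)_i \geq \dim_k L(\Gamma,E')_i$ for all $i$, which is exactly the claim.

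The main step is to prove surjectivity, and this is where the hypothesis $\sigma(E) = \sigma(E')$ enters. The module $L(\Gamma,E')$ is generated as a $k[\lk_\Gamma(E')]$-module by the classes $[x^{F'}]$ for faces $F' \in \lk_\Gamma(E')$ with $F' \sqcup E'$ interior. For any such $F'$, I would observe that $F'$ is also a face of $\lk_\Gamma(E)$ (since $F' \cap E = \emptyset$ and $F' \cup E \subseteq F' \cup E' \in \Gamma$) and that
\[
\sigma(F' \sqcup E) = \sigma(F') \cup \sigma(E) = \sigma(F') \cup \sigma(E') = \sigma(F' \sqcup E') = V,
\]
so $F' \sqcup E$ is interior with respect to $E$. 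Hence $[x^{F'}] \in L(\Gamma,E)$, and since $\phi$ is a $k[\lk_\Gamma(E')]$-algebra homomorphism with $\phi(1)=1$, it sends this class to $[x^{F'}]$ in the target. Thus every generator of $L(\Gamma,E')$ lies in the image of $\phi|_{L(\Gamma,E)}$, and $k[\lk_\Gamma(E')]$-linearity of $\phi$ upgrades this to surjectivity onto all of $L(\Gamma,E')$.

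The point that requires careful justification is the identity $\phi([x^{F'}]) = [x^{F'}]$: this amounts to unpacking the $k[\lk_\Gamma(E')]$-algebra structure on the source of $\phi$, induced through the inclusion $k[\lk_\Gamma(E')] \subset k[\Star(E' \smallsetminus E)]$ and the natural quotient $k[\lk_\Gamma(E)] \twoheadrightarrow k[\Star(E' \smallsetminus E)]$ that underlies the construction in Theorem~\ref{thm:maps}, and using that $F'$ already lies in $\Star(E' \smallsetminus E)$. Once that identity is in hand, the remainder of the proof is a routine dimension count, degree by degree.
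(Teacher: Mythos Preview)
Your proposal is correct and follows essentially the same route as the paper's own proof: apply Theorem~\ref{thm:maps} to obtain $\phi$, then show that $\phi|_{L(\Gamma,E)}$ is surjective by checking that each generator $x^{F'}$ of $L(\Gamma,E')$ (with $F'\sqcup E'$ interior) already lies in $L(\Gamma,E)$ and is fixed by $\phi$. The paper's argument is the same, only more terse; your added remark that $\phi(x^{F'})=x^{F'}$ follows from the $k[\lk_\Gamma(E')]$-linearity of $\phi$ together with $F'\in\Star(E'\smallsetminus E)$ is exactly the mechanism used in the proof of Theorem~\ref{thm:maps}.
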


\noindent The inequality in Theorem~\ref{thm:increase} is term by term, i.e., $\dim L(\Gamma, E)_i \geq \dim L(\Gamma,E')_i$ for all $i$. The proof is by showing that the map $\phi \colon L(\Gamma, E) \to L(\Gamma,E')$ given by Theorem~\ref{thm:maps} is surjective.

Our second application of the above theorems is to a decades old problem posed by Stanley, who introduced and studied local $h$-vectors in the special case where $E = \emptyset$ and asked for a characterization of triangulations for which they vanish \cite[Problem~4.13]{Stanley92}. This problem remains open, and is of enduring interest \cite[Problem~2.12]{Athanasiadis16}. The extension to the case where $E$ is not empty is particularly relevant for applications to the monodromy conjecture \cite{Igusa78, DenefLoeser98, Stapledon17}. In \cite{LarsonPayneStapledon}, we prove a theorem on the structure of geometric triangulations with vanishing local $h$-vectors that is tailored to this purpose, and we use it to prove the monodromy conjectures for all singularities that are nondegenerate with respect to a simplicial Newton polyhedron.  See Theorems~1.1.1, 1.4.3, and 4.1.3 in loc. cit.  

Here, we apply  Theorem~\ref{thm:resolution} to prove another theorem on the structure of faces in triangulations with vanishing local $h$-vectors.  Let $F \in \lk_\Gamma(E)$ be a face such that $F\sqcup E$ is interior. Following terminology from the monodromy conjecture literature (see, e.g., \cite{LemahieuVanProeyen11}), we say that $F$ is a \emph{pyramid with apex $w \in F$}  if $(F \sqcup E )  \smallsetminus w$ is not interior. Let $$\mathcal{A}_F := \{ w \in F : F \mbox{ is a pyramid with apex } w \}, \mbox{ \ and \ } V_w := \sigma( (F \sqcup E) \smallsetminus w)^c.$$
The elements of $V_w$ correspond to the \emph{base directions} of $F$, i.e., the facets of $2^V$ that contain the base of $F$, when viewed as a pyramid with apex $w$.  We say $F$ is a $U$-pyramid if there is an apex $w \in \mathcal{A}_F$ such that $|V_w|  = 1$. In other words, a $U$-pyramid is a pyramid with a unique base direction, for some choice of apex.

\begin{definition}
Let $F \in \lk_{\Gamma}(E)$ be a face.  An \emph{interior partition} of $F$ is a decomposition
\[
F = F_1 \sqcup F_2 \sqcup \mathcal{A}_F
\]
such that $F_1 \sqcup \mathcal{A}_F \sqcup E$ and $F_2 \sqcup \mathcal{A}_F \sqcup E$ are both interior.
\end{definition}

\begin{theorem}\label{thm:interiornonvanish}
Suppose $\ell(\Gamma,E) = 0$ and $F \in \lk_\Gamma(E)$ has an interior partition $F = F_1 \sqcup F_2 \sqcup \mathcal{A}_F$ such that $|F_1| \leq 2$.  Then $F$ is a $U$-pyramid.
\end{theorem}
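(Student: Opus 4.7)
The plan is to apply Theorem~\ref{thm:resolution} to the interior monomial $x^{F_1 \sqcup \mathcal{A}_F}$ and extract the $U$-pyramid property by comparing standard monomial coefficients in the face ring. Since $\ell(\Gamma,E)=0$, the module $L(\Gamma,E)$ vanishes, so Theorem~\ref{thm:resolution} gives a surjection $\bigoplus_{i=1}^d I_{\{v_i\}}[-1] \twoheadrightarrow I$. Because $F_1 \sqcup \mathcal{A}_F \sqcup E$ is interior, I would fix a presentation
\[
x^{F_1 \sqcup \mathcal{A}_F} = \sum_{i=1}^{d} \theta_i g_i, \qquad g_i \in I_{\{v_i\}}.
\]
Writing $\theta_i = \sum_w a_{i,w}\,x_w$ (with $a_{i,w}=0$ unless $v_i\in\sigma(w)$) and expanding each $g_i$ in the standard monomial basis, I use that $I_{\{v_i\}}$ is a monomial ideal: the coefficient $c_{i,G}$ of a squarefree monomial $x^G$ in $g_i$ vanishes unless $\sigma(G\sqcup E)^c\subseteq\{v_i\}$. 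Setting $W_w := \sigma(((F_1\sqcup\mathcal{A}_F)\setminus w)\sqcup E)^c$ and matching coefficients of $x^{F_1\sqcup\mathcal{A}_F}$ gives
\[
1 = \sum_{w\in F_1\sqcup\mathcal{A}_F}\sum_{i} a_{i,w}\,c_{i,\,(F_1\sqcup\mathcal{A}_F)\setminus w},
\]
in which each nonzero summand requires $|W_w|\leq 1$.

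The key observation is that $F_1\sqcup\mathcal{A}_F\subseteq F$ implies $V_w\subseteq W_w$ for $w\in\mathcal{A}_F$, and $|V_w|\geq 1$ because $w$ is an apex; thus $|W_w|\geq 1$, and any nonzero summand with $w\in\mathcal{A}_F$ forces $|V_w|=|W_w|=1$, exhibiting $F$ as a $U$-pyramid with apex $w$. It therefore suffices to rule out the scenario where every nonzero summand has $w\in F_1$. When $|F_1|=0$ this is immediate, giving the theorem. For $|F_1|\in\{1,2\}$ I would argue by contradiction from the assumption $|V_w|\geq 2$ for every $w\in\mathcal{A}_F$: then some $w\in F_1$ satisfies $|W_w|\leq 1$. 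If $|W_w|=0$, then $((F_1\sqcup\mathcal{A}_F)\setminus w)\sqcup E$ is interior, producing a new interior partition with $F_1'=F_1\setminus w$ and $F_2'=F_2\sqcup\{w\}$; induction on $|F_1|$ then forces $F$ to be a $U$-pyramid, a contradiction.

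The remaining sub-case is $|W_w|=1$, with $W_w=\{v_{i_w}\}$. Here, interiority of $F_2\sqcup\mathcal{A}_F\sqcup E$ combined with $v_{i_w}\notin\sigma(\mathcal{A}_F\sqcup E)$ forces $v_{i_w}\in\sigma(F_2)$, and I would pick $z\in F_2$ with $v_{i_w}\in\sigma(z)$. Comparing the coefficient of the auxiliary standard monomial $x^{((F_1\sqcup\mathcal{A}_F)\setminus w)\sqcup z}$ on both sides: contributions from $w'\in\mathcal{A}_F$ still vanish because the analogous complement-of-$\sigma$ set contains $V_{w'}$ of size at least $2$. The bound $|F_1|\leq 2$ then limits the surviving non-apex contributions to one from $w'=z$ and at most one from $w'\in F_1\setminus w$. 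The leading term $a_{i_w,z}\,c_{i_w,\,(F_1\sqcup\mathcal{A}_F)\setminus w}$ is nonzero by the original coefficient equation (forcing $c_{i_w,\,(F_1\sqcup\mathcal{A}_F)\setminus w}\neq 0$) together with $a_{i_w,z}\neq 0$---available after replacing the special l.s.o.p.\ with a generic one, which is harmless since $\ell(\Gamma,E)=0$ is an invariant of the triangulation. This contradicts the zero coefficient on the left.

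The main obstacle is closing the cancellation argument when $|F_1|=2$, where the auxiliary equation carries an extra non-apex term from $F_1\setminus w$ that must be prevented from neutralizing the leading contribution---either by a careful combinatorial choice of $z$, or by a further reduction to the $|F_1|=1$ subroutine after rearranging the partition. The hypothesis $|F_1|\leq 2$ is precisely what makes this additional bookkeeping tractable.
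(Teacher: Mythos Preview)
Your coefficient-matching strategy is different from the paper's approach, and it does handle $|F_1|\le 1$: the case $|F_1|=0$ is exactly Remark~\ref{r:specialcase}, and for $|F_1|=1$ your auxiliary-monomial argument closes cleanly since the only non-apex contribution to the coefficient of $x^{\mathcal{A}_F\sqcup\{z\}}$ comes from $w'=z$. But the case $|F_1|=2$ is left open, as you yourself concede, and this is the heart of the theorem. When $F_1=\{u,t\}$ and you compare coefficients of $x^{\{t\}\sqcup\mathcal{A}_F\sqcup\{z\}}$, the extra contribution from $w'=t$ is $\sum_i a_{i,t}\,c_{i,\mathcal{A}_F\sqcup\{z\}}$, governed by $\sigma((\mathcal{A}_F\sqcup\{z\})\sqcup E)^c$; there is no a priori bound forcing this set to have size at least~$2$, and when it has size at most~$1$ the term can cancel your leading contribution. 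Iterating with further auxiliary monomials produces a cascading system of constraints on which vertices $v\in F$ have $\{v\}\sqcup E$ of carrier codimension~$1$ and which pairs form interior edges---and managing that system is exactly where the difficulty lies.

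The paper does not attempt to close this directly. It first restricts to $k[F]$ and uses a surjection $L(\Gamma,E)|_F\twoheadrightarrow L(\Gamma,\mathcal{A}_F\sqcup E)|_{F\smallsetminus\mathcal{A}_F}[-|\mathcal{A}_F|]$ to reduce to $\mathcal{A}_F=\emptyset$, then organizes the degree-$2$ constraints into a graph (the \emph{internal edge graph}), proving that each component is a tree or unicyclic with tight control on carrier codimensions, and separately ruling out $4$-cycles via the Koszul relation $\theta_1\theta_2=\theta_2\theta_1$. The final argument is a genuine case analysis using these structural lemmas. Your proposal supplies no substitute for this machinery, and the closing sentence ``either by a careful combinatorial choice of~$z$, or by a further reduction to the $|F_1|=1$ subroutine'' is a hope, not an argument: neither option is carried out, and Example~\ref{example:nonrestriction} shows that naive coefficient-matching on $L(\Gamma,E)|_F$ already collapses at $|F_1|=3$, so one should expect the $|F_1|=2$ case to require real work beyond a single auxiliary monomial and l.s.o.p.\ genericity.
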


\noindent See Remark~\ref{r:specialcase} for a short proof in a special case that illustrates the naturality of the 
$U$-pyramid condition.
The method of proof breaks down when $|F_i| \geq 3$.  See Example~\ref{example:nonrestriction}.

\begin{remark}
The analogous theorem in \cite{LarsonPayneStapledon} requires that the triangulation be geometric and that the interior partition satisfies the additional condition $\sigma(F_2 \sqcup E)^c = \bigcup_{w \in \mathcal{A}_F} V_w$.  But then the hypothesis that $|F_1| \leq 2$ is dropped entirely.  So, even for geometric triangulations, there are cases of Theorem~\ref{thm:interiornonvanish} that are not necessarily covered by \cite[Theorem~4.1.3]{LarsonPayneStapledon}. It should be interesting to look for a common generalization of these vanishing results, and to pursue further progress on Stanley's problem of characterizing triangulations with vanishing local $h$-vector more generally.
\end{remark}

\begin{remark}
To the best of our knowledge, all of the theorems stated in the introduction are new even for regular triangulations. The reader who prefers to do so may safely restrict attention to geometric or even regular triangulations.  However, while the structure results for triangulations with vanishing local $h$-vectors in \cite{dMGPSS20} and \cite{LarsonPayneStapledon} rely on special properties of geometric triangulations, the proofs presented here work equally well for quasi-geometric homology triangulations, and we find it  natural to work in this level of generality.
\end{remark}

We conclude the introduction with an example illustrating the above theorems.

\begin{example} \label{ex:triforce}
Let $\Gamma$ be the \emph{triforce} triangulation, which figures prominently in \cite{dMGPSS20} and in the adventures of hero protagonist Link in the video game series The Legend of Zelda. 

\medskip

\begin{center}
\begin{tikzpicture}[scale=2]
\draw (0.5,0.8) node[above] { $u$ } -- 
(1,0) node[right] { $v$ } -- 
(0,0) node[left] { $w$ } -- (0.5,0.8);

\draw (0.75,0.4) node[right] { $c$ } -- 
(0.5,0) node[below] { $a$ } -- 
(0.25,0.4) node[left] { $b$ } -- (0.75,0.4) ;

\draw (-.75,0.4) node {$\Gamma$ };

\end{tikzpicture}
\end{center}

Let $x_a := x^{\{ a \}}$, 
$x_b := x^{ \{ b \} }$, 
$x_c := x^{\{ c \}}$, 
$x_u := x^{\{ u \}}$, 
$x_v := x^{\{ v \}}$, 
$x_w := x^{\{ w \}}$. 
Consider first $E = \emptyset$.  The face ring is 
\[
k[\lk_\Gamma(E)] = k[x_{a},x_{b},x_{c},x_{u},x_{v},x_{w}]/(x_{a}x_{u},x_{b}x_{v},x_{c}x_{w},x_{u}x_{v},x_{u}x_{w},x_{v}x_{w}),
\]
and its ideal of interior faces is 
\[
I = (x_ax_b, x_ax_c, x_bx_c).
\]
A special l.s.o.p. is of the form $\theta_1, \theta_2, \theta_3$, with
\[
\supp(\theta_1) = \{ b,c,u\}, \quad \quad \supp(\theta_2) = \{ a,c,v\}, \quad \quad \supp(\theta_3) = \{ a,b,w\},
\]
subject to the condition that the restrictions (of the corresponding affine linear functions) to the face $\{a, b, c\}$ are linearly independent. Our resolution of the local face module $L(\Gamma, E)$ also involves the monomial ideals
\[
\begin{array}{ccc}
I_u = (x_a, x_bx_c), & I_v = (x_b, x_ax_c), & I_w = (x_c, x_ax_b), \\ I_{uv} = (x_a, x_b, x_w), & I_{uw} = (x_a, x_c, x_v), & I_{vw} = (x_b, x_c, x_u).
\end{array}
\]
The resolution given by Theorem~\ref{thm:resolution} is then 
\[
0 \to k[\lk_\Gamma(E)] \xrightarrow{\begin{bsmallmatrix}
\theta_1 \\
-\theta_2 \\
\theta_3
\end{bsmallmatrix}} I_{vw} \oplus I_{uw} \oplus I_{uv}  \xrightarrow{\begin{bsmallmatrix} 
0 & -\theta_3 & -\theta_2\\
-\theta_3 & 0 & \theta_1\\
\theta_2 & \theta_1 & 0\\
 \end{bsmallmatrix} } I_u \oplus I_v \oplus I_w \xrightarrow{\begin{bsmallmatrix}
\theta_1 &
\theta_2 &
\theta_3
\end{bsmallmatrix}} I \to L(\Gamma, E) \to 0
\]
In particular, we have $L(\Gamma, E) \cong I/J$, where 
\[
( \theta_1 \cdot x_a, \theta_2 \cdot x_b, \theta_3 \cdot x_c ) \subset J.
\] 
Since $\theta_1$, $\theta_2$, and $\theta_3$ restrict to linearly independent functions on $\{a, b,c\}$, the elements
$\{ \theta_1 \cdot x_a, \theta_2 \cdot x_b, \theta_3 \cdot x_c \}$
span the 3-dimensional subspace 
$\langle x_a x_b, x_a x_c, x_bx_c \rangle$ 
of $k[\lk_\Gamma(E)]$.  Hence $I = J$ and $L(\Gamma, E) = 0$.

Next, consider $E' = \{c\}$.  Then 
\[
k[\lk_\Gamma(E')] = k[x_a, x_b, x_u, x_v] / (x_ax_u, x_bx_v, x_ux_v).
\]
A special l.s.o.p. is any l.s.o.p. of the form $\zeta_1, \zeta_2$, where $\supp(\zeta_1) \subset \{a, b\}$. The ideal of interior faces in this case is $I' = (x_a, x_b)$, and the resolution given by Theorem~\ref{thm:resolution} is
\[
0 \to k[\lk_\Gamma(E')] \xrightarrow{\begin{bsmallmatrix}
-\zeta_2 \\
\zeta_1 \\
\end{bsmallmatrix}} k[\lk_\Gamma(E')] \oplus I' \xrightarrow{\begin{bsmallmatrix}
\zeta_1 &
\zeta_2
\end{bsmallmatrix}} I' \to L(\Gamma,E') \to 0.
\] 
Note, in particular, that $L(\Gamma,E') \cong I'/J'$, where $J' = (\zeta_1, \zeta_2 x_a, \zeta_2 x_b)$. Thus one sees that $L(\Gamma,E')$ has dimension 1 in degree 1, i.e., $\ell(\Gamma, E') = (0,1,0)$.

Let us now consider Theorem~\ref{thm:maps} in this example.  Let $\theta'_i$ denote the restriction of $\theta_i$ to $k[\Star(E' \smallsetminus E)]$.  Note that $\zeta_1 := \theta'_3$ is supported on $\lk_\Gamma(E')$. Extend $\{ \zeta_1 \}$ to a basis for $k[\lk_\Gamma(E)] \cap (\theta'_1, \theta'_2, \theta'_3)$, e.g., by choosing $\zeta_2$ to be a linear combination of $\theta'_1$ and $\theta'_2$ in which the coefficient of $x_c$ vanishes.  Then $\zeta_1, \zeta_2$ is a special l.s.o.p. for $k[\lk_\Gamma(E')]$, and the map $\phi$ in Theorem 1.4 is given as follows.  First, we set
\[
\phi(x_a) = x_a, \quad \phi(x_b) = x_b, \quad \phi(x_u) = x_u, \quad \phi(x_v) = x_v, \quad \phi(x_w) = 0.
\]
Then, writing $\theta_2 = \lambda_c x_c + \lambda_a x_a + \lambda_v x_v$, with all three coefficients nonzero, we set 
\[
\phi(x_c) = \frac{-1}{\lambda_c} (\lambda_ax_a + \lambda_v x_v).
\]

Note that there is no subset of $\{ \theta_1, \theta_2, \theta_3 \}$ whose restrictions to $k[\lk_\Gamma(E')]$ form an l.s.o.p.  This explains and motivates our two-step process for constructing the map: first restricting to $\Star(E' \smallsetminus E)$ and then intersecting with $k[\lk_\Gamma(E')]$ to produce the special l.s.o.p. that yields the functorial map $\phi \colon L(\Gamma,E) \to L(\Gamma,E')$.

Let also describe how Theorems~\ref{thm:increase} and \ref{thm:interiornonvanish}  manifest in this example.  For Theorem~\ref{thm:interiornonvanish}, observe that the face $F = \{a, b\}$ in $\lk_\Gamma(E')$ has an interior partition $F = \{a\} \sqcup \{b\}$. The proof in this case shows that the classes of both $x_a$ and $x_b$ are nonzero in $L(\Gamma,E')$, for any choice of special l.s.o.p.

Finally, note that $L(\Gamma,E) = 0$ and $L(\Gamma, E') \neq 0$, so there is no surjective map of graded vector space $L(\Gamma,E) \to L(\Gamma,E')$.  In this case, $\sigma(E) \neq \sigma(E')$.  Thus, we see that the hypothesis $\sigma(E) = \sigma(E')$ cannot be dropped in Theorem~\ref{thm:increase}.
\end{example}

\noindent \textbf{Acknowledgments.}  We thank the referees for their helpful comments. The work of ML is supported by an NDSEG fellowship and the work of SP is supported in part by NSF DMS--2001502 and DMS--2053261.

\section{Preliminaries} \label{sec:sr}

We begin by recalling definitions and background results that will be used throughout, following \cite[Chapter~III]{Stanley96} and \cite{Athanasiadis16}. We work over a field $k$. In particular, all rings are commutative $k$-algebras and singular homology is computed with coefficients in $k$.    

\subsection{Triangulations of simplices} \label{sec:triangulations}
In this section only, for the purposes of providing context, we allow that the field $k$ may be finite, and the triangulation $\sigma \colon \Gamma \to 2^V$ is not necessarily quasi-geometric.

\medskip

We recall the notion of a homology triangulation, following \cite{Athanasiadis12}.  A $d$-dimensional simplicial complex $\Gamma$ with trivial reduced homology is a \emph{homology ball} 
of dimension $d$ if there is a subcomplex $\partial \Gamma \subset \Gamma$  such that 
\begin{itemize}
\item $\partial \Gamma$ is a homology sphere of dimension $d -1$,
\item $\lk_\Gamma(F)$ is a homology sphere of dimension $d - |F|$ for $F \not \in \partial \Gamma$.
\item $\lk_\Gamma(F)$ is a homology ball of dimension $d - |F|$ for all nonempty $F \in \partial \Gamma$.
\end{itemize}
The \emph{interior faces} of a homology ball $\Gamma$ are the faces not contained in $\partial \Gamma$.  A  \emph{homology triangulation} of the simplex $2^V$ is a finite simplicial complex $\Gamma$ and a map $\sigma\colon \Gamma \to 2^V$ such that for every non-empty $U \subset V$,
\begin{itemize}
\item the simplicial complex $\Gamma_U := \sigma^{-1}(2^U)$ is a homology ball of dimension $\vert U \vert - 1$.
\item $\sigma^{-1}(U)$ is the set of interior faces of the homology ball $\sigma^{-1}(2^U)$.
\end{itemize}
\noindent Note that the Betti numbers of a simplicial complex, and hence the property of being a homology ball, depend only on the characteristic of the field $k$. Homology triangulations are a special case of the (strong) formal subdivisions of Eulerian posets considered in \cite[\S 7]{Stanley92} and \cite[\S 3]{KatzStapledon16}.

The \emph{carrier} of a face $F \in \Gamma$ is $\sigma(F)$. A homology triangulation $\sigma\colon \Gamma \to 2^V$ is \emph{quasi-geometric} if there is no face $F \in \Gamma$ and $U \subset V$ such that the dimension of $\Gamma_U$ is strictly smaller than the dimension of $F$ and the carrier of every vertex in $F$ is contained in $U$.  A homology triangulation is \emph{geometric} if it can be realized in $\R^n$ as the subdivision of a geometric simplex into geometric simplices. Every geometric homology triangulation is quasi-geometric. 

The local $h$-vector, which we have defined in the introduction as the Hilbert function of the local face module, can be expressed in terms of $h$-vectors of subcomplexes of links of faces in the homology balls $\Gamma_U$:
\begin{equation} \label{eq:localh}
\ell(\Gamma, E) = \sum_{U \supset \sigma(E)} (-1)^{|V| - \vert U \vert} h(\lk_{\Gamma_U}(E)).
\end{equation}
Note that \eqref{eq:localh} makes sense even when $k$ is finite or $\sigma \colon \Gamma \to 2^V$ is not quasi-geometric, and should be taken as the definition of the local $h$-vector in this broader context.

\begin{theorem}[\cite{Stanley92, Athanasiadis12, KatzStapledon16}]\label{t:localproperties}
Let $\sigma \colon \Gamma \to 2^V$ be a homology triangulation, let $E$ be a face of $\Gamma$ and let $d = |V| - |E|$.  Then the local $h$-vector $(\ell_0, \ldots, \ell_d)$ satisfies: \\
\begin{tabular}{lll}
\quad \quad $\bullet$ &  \emph{(symmetry)} &
$\ell_i = \ell_{d-i};$ \\
\quad \quad $\bullet$ & \emph{(non-negativity)} & if $\Gamma$ is quasi-geometric, then $\ell_i \geq 0;$\\
\quad \quad $\bullet$ & \emph{(unimodality)} &
 if $\Gamma$ is regular, then $\ell_0 \leq \ell_1 \leq \cdots \leq \ell_{\lfloor d/2 \rfloor}$.
\end{tabular}
\end{theorem}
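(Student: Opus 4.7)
The plan is to handle the three assertions separately, since they require quite different methods. Symmetry will follow from the inclusion-exclusion formula \eqref{eq:localh} combined with Dehn-Sommerville relations for homology balls; non-negativity in the quasi-geometric case becomes essentially definitional once one identifies $\ell(\Gamma,E)$ with the Hilbert function of the local face module $L(\Gamma,E)$; and unimodality in the regular case requires a Hard Lefschetz type input.

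For symmetry, I would start from \eqref{eq:localh}, which writes the local $h$-polynomial as an alternating sum over $U \supset \sigma(E)$ of $h$-polynomials of $\lk_{\Gamma_U}(E)$. Each such link is a homology ball of dimension $|U|-|E|-1$, and for any homology ball $B$ of dimension $d-1$ one has the Dehn-Sommerville identity relating the $h$-polynomial of $B$ to the interior $h$-polynomial $h^{\mathrm{int}}(B;t) = t^{d}h(B;1/t)$. Applying this identity termwise and reorganizing the alternating sum via M\"obius inversion on the Boolean lattice of subsets of $V$ containing $\sigma(E)$ rearranges $\ell(\Gamma,E;t)$ into $t^{d}\ell(\Gamma,E;1/t)$, giving $\ell_i = \ell_{d-i}$.

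For non-negativity, the strategy is to check that the Hilbert function of $L(\Gamma,E)$ equals the right-hand side of \eqref{eq:localh}, at which point $\ell_i \geq 0$ holds trivially because $L(\Gamma,E)$ is a graded $k$-vector space. The input here is twofold: first, one needs the existence of a special l.s.o.p. $\theta_1,\ldots,\theta_d$ for $k[\lk_\Gamma(E)]$, which is where the quasi-geometric hypothesis really enters, via the construction of Athanasiadis in \cite{Athanasiadis12b}; second, one needs to verify via inclusion-exclusion over the subcomplexes $\Gamma_U$ that $\dim L(\Gamma,E)_i$ agrees with the alternating sum of $h$-vector coefficients on the right-hand side of \eqref{eq:localh}. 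Both steps are already in the literature cited.

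For unimodality, I would invoke a Hard Lefschetz theorem. In the regular setting, the subdivision $\Gamma$ corresponds to a refinement of rational fans, and one can associate a projective toric variety whose (relative) cohomology realizes the local face module. A sufficiently generic degree-$1$ element $\omega$ then acts as a Lefschetz operator: multiplication by $\omega^{d-2i}$ is an isomorphism $L(\Gamma,E)_i \to L(\Gamma,E)_{d-i}$ for $i \leq d/2$, which together with symmetry forces the ascending inequalities. The main obstacle is clearly this last step: even stating the correct Lefschetz property in the relative setting requires nontrivial machinery, and extending it to the non-rational regular case requires Karu's relative Hard Lefschetz theorem \cite{Karu19}. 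Symmetry and non-negativity, by contrast, are much more elementary once the Dehn-Sommerville framework and the existence of special l.s.o.p.s are granted.
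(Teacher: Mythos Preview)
Your outline is accurate, but note that the paper does not actually prove this theorem: it is stated as a cited result from \cite{Stanley92, Athanasiadis12, KatzStapledon16}, and the only commentary the paper offers is the remark immediately following the statement, that non-negativity is obtained ``via the identification with the Hilbert function of the local face module,'' which is exactly what you say. So there is no ``paper's own proof'' to compare against beyond that one-line note, and your non-negativity paragraph matches it.

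Your sketches for symmetry (Dehn--Sommerville for homology balls plus inclusion--exclusion over the Boolean interval $[\sigma(E),V]$) and for unimodality (a Lefschetz argument in the regular case) are the standard routes taken in the cited references; in particular the symmetry argument is the one in \cite{Stanley92} for $E=\emptyset$ and its extensions in \cite{Athanasiadis12, KatzStapledon16}, and unimodality goes back to Stanley's toric argument with the relative/non-rational refinements you mention. One small clarification on unimodality: the classical rational regular case already falls under Stanley's original toric Hard Lefschetz argument, so you do not strictly need \cite{Karu19} for the statement as written; Karu's contribution is the extension to non-rational regular (polyhedral) subdivisions, which the paper mentions separately in the introduction rather than as part of Theorem~\ref{t:localproperties}.
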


\noindent Note that the proof of non-negativity for quasi-geometric triangulations, due to Stanley and Athanasiadis, is via the identification with the Hilbert function of the local face module. It suffices to consider the case where $k$ is infinite, since \eqref{eq:localh} is invariant under field extensions.

\subsection{Face rings and special l.s.o.p.s}\label{ss:sr}

Here, and for the remainder of the paper, the field $k$ is fixed and infinite, and all triangulations are quasi-geometric homology triangulations.

 Given a finite simplicial complex $\Gamma$ with vertex set $V = \{v_1, \ldots, v_n\}$, let $k[\Gamma]$ denote the \emph{face ring}. In other words, for each subset $F \subset V$, let $x^F$ be the corresponding squarefree monomial in the polynomial ring $k[x_1, \ldots, x_n]$, i.e.,
 $
 x^F:= \prod_{v_i \in F} x_i.
 $ 
 Then the face ring is
 \[
 k[\Gamma] := k[x_1, \ldots, x_n] / (x^F : F \mbox{ is not a face in } \Gamma).
 \]
Given a subcomplex $\Gamma'$ of $\Gamma$, we have a natural restriction map $k[\Gamma] \rightarrow  k[\Gamma']$, taking $x^F$ to $x^F$ if $F \in \Gamma'$ and to 0 otherwise. Given $\theta \in k[\Gamma]$, let $\theta|_{\Gamma'}$ denote the image of $\theta$ in $k[\Gamma']$. In particular, each $F$ in $\Gamma$ may be viewed as a subcomplex, and we write $\theta|_F$ for the restriction of $\theta$ to this subcomplex.

Note that $k[\Gamma]$ is graded by degree. By definition, a linear system of parameters (l.s.o.p.) for a finitely generated graded $k$-algebra $R$ of Krull dimension $d$ is a sequence of elements $\theta_1, \ldots, \theta_d$ in $R_1$ such that $R/(\theta_1, \ldots, \theta_d)$ is a finite-dimensional $k$-vector space. If $\Gamma$ is a Cohen-Macaulay complex (i.e., if $k[\Gamma]$ is a Cohen-Macaulay ring) and $\theta_1, \ldots, \theta_d$ is an l.s.o.p.\ for $k[\Gamma]$, then $(\theta_1, \ldots, \theta_d)$ is a regular sequence and the $h$-polynomial of $\Gamma$ is the Hilbert series of $k[\Gamma]/(\theta_1, \ldots, \theta_d)$.  Links of faces in triangulations of simplices are Cohen-Macaulay \cite{Reisner76}. 

Suppose $\Gamma$ has dimension $d-1$, so $k[\Gamma]$ has Krull dimension $d$.  Then a sequence of elements $\theta_1, \ldots, \theta_d$ in $k[\Gamma]_1$ is an l.s.o.p. for $k[\Gamma]$ if and only if the following condition is satisfied \cite[Lemma~2.4(a)]{Stanley96}:

\renewcommand{\labelitemi}{$(*)$}
\begin{itemize}
\item For every face $F \in \Gamma$ (or equivalently, for every facet $F \in \Gamma$), the restrictions $\theta_1|_F, \ldots, \theta_d|_F$ span a vector space of dimension $|F|$. 
\end{itemize}

\noindent  This characterization provides flexibility in constructing l.s.o.p.s in which the linear functions have specified support, where the \emph{support} of $\theta = \sum a_i x_i$ is $\supp(\theta) := \{ v_i : a_i \neq 0 \}$.

\begin{lemma} \label{lemma:lsopexistence}
Let $S_1, \ldots, S_d$ be subsets of the vertices of $\Gamma$.  Then there is an l.s.o.p. $\theta_1, \ldots, \theta_d$ for $k[\Gamma]$ such that $\supp (\theta_i) = S_i$ for $1 \leq i \leq d$ if and only if, for every face $F \in \Gamma$,
\begin{equation} \label{eq:marriageinequality}
| \{ S_i : S_i \cap F \neq \emptyset \}| \geq |F|.
\end{equation}
\end{lemma}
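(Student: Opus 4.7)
The plan is to prove this as a two-way application of the Hall marriage theorem combined with the characterization $(*)$ and a genericity argument using that $k$ is infinite.

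For the necessity direction, suppose $\theta_1, \ldots, \theta_d$ is an l.s.o.p.\ with $\supp(\theta_i) = S_i$. Fix a face $F \in \Gamma$. If $S_i \cap F = \emptyset$, then $\theta_i|_F = 0$, so the span of $\theta_1|_F, \ldots, \theta_d|_F$ equals the span of those $\theta_i|_F$ with $S_i \cap F \neq \emptyset$. By $(*)$ this span has dimension $|F|$, so there must be at least $|F|$ indices $i$ with $S_i \cap F \neq \emptyset$, which is \eqref{eq:marriageinequality}.

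For the sufficiency direction, I would introduce indeterminate coefficients $c_{i,v}$ for each $i \in \{1,\ldots,d\}$ and each $v \in S_i$, and consider the generic linear forms $\theta_i = \sum_{v \in S_i} c_{i,v} x_v$. By $(*)$, the condition that these form an l.s.o.p.\ is the simultaneous non-vanishing, for every facet $F \in \Gamma$, of the polynomial $p_F(c) \in k[c_{i,v}]$ defined as the sum of squares (or just the product) of the maximal minors of the matrix whose rows are $\theta_1|_F, \ldots, \theta_d|_F$ expressed in the basis $\{x_v : v \in F\}$. Since there are finitely many facets, it suffices to show each $p_F$ is not identically zero; then the product $\prod_F p_F$ is nonzero, and since $k$ is infinite we can specialize the $c_{i,v}$ to values in $k$ avoiding its vanishing locus.

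The key observation, which is where Hall's marriage theorem enters, is that for any subset $T \subset F$, the set $T$ is itself a face of $\Gamma$, so the hypothesis \eqref{eq:marriageinequality} applied to $T$ gives exactly the Hall condition $|\{i : S_i \cap T \neq \emptyset\}| \geq |T|$ for the bipartite graph on $\{1,\ldots,d\} \sqcup F$ with edges $\{(i,v) : v \in S_i \cap F\}$. Hence there is a system of distinct representatives: indices $i_1, \ldots, i_{|F|}$ and a bijection $v_j \in S_{i_j} \cap F$. Specializing $c_{i_j, v_j} = 1$ and all other $c_{i,v} = 0$ makes the rows $\theta_{i_1}|_F, \ldots, \theta_{i_{|F|}}|_F$ equal to the standard basis vectors of $k^F$ (up to permutation), so the corresponding $|F| \times |F|$ minor of the coefficient matrix is $\pm 1 \neq 0$. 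Thus $p_F$ is not identically zero, completing the argument.

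The main obstacle is recognizing that the marriage-inequality hypothesis, applied not just to $F$ but also to all of its subfaces (which are automatically in $\Gamma$), supplies exactly the full Hall condition needed for the bipartite matching on each individual face; after that, the reduction to genericity over an infinite field is standard.
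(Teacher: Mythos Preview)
Your approach is essentially the same as the paper's: both use Hall's marriage theorem to exhibit, for each face $F$, a specialization witnessing that the rank-$|F|$ condition is a nonempty Zariski open, and then use infiniteness of $k$ to intersect these opens. Two small technical points deserve attention. First, your candidate $p_F$ (``sum of squares'' or ``product'' of the maximal minors) does not quite work over a general infinite field: a sum of squares can vanish without each term vanishing, and the product vanishes whenever \emph{any} single minor does, whereas you want nonvanishing when \emph{some} minor survives. The fix is immediate---take $p_F$ to be the single $|F|\times|F|$ minor selected by your Hall matching, or simply argue with the Zariski-open locus $X_F$ of rank-$|F|$ matrices, as the paper does. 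Second, your specialization only guarantees $\supp(\theta_i)\subset S_i$, not equality; the paper handles this by also intersecting with the nonempty open set $\{c_{i,v}\neq 0 \text{ for all } i,v\}$, and you should do the same.
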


\begin{proof} 
The argument is similar to that given by Stanley in \cite[Corollary~4.4]{Stanley92}. The necessity of \eqref{eq:marriageinequality} follows immediately from (*). We now prove its sufficiency.  Suppose $S_1, \ldots, S_d$ are chosen such that \eqref{eq:marriageinequality} holds for every 
$F \in \Gamma$.    
Let $N = |S_1| + \cdots + |S_d|$, and consider the space $k^N$ 
parametrizing tuples $(\theta_1, \ldots, \theta_d)$
with $\supp (\theta_i) \subset S_i$.
Fix $F = \{v_1, \dotsc, v_k\} \in \Gamma$.
Let $X_F \subset k^N$  parametrize the tuples 
 whose restrictions to $F$ 
 span a vector space of dimension $|F|$.
Note that $X_F$ is Zariski open.  
By Hall's Marriage Theorem, there is a permutation $\sigma \in \mathfrak{S}_d$ such that $v_i \in S_{\sigma(i)}$. If we set $\theta_{\sigma(i)} = x_i$ for $1 \le i \le k$, and $\theta_{\sigma(i)} = 0$ for $i > k$, then $\theta \in X_F$, and hence $X_F$ is nonempty. Also, the subset of $k^N$ where all coordinates are nonzero is Zariski open and nonempty.  Since $k$ is infinite, the intersection of these nonempty Zariski open subsets of $k^N$ is nonempty, and hence there is an l.s.o.p. $\theta_1, \ldots, \theta_d$ with $\supp(\theta_i)= S_i$. 
\end{proof}

Let $\sigma \colon \Gamma \to 2^V$ be a quasi-geometric homology triangulation,   and let $E \in \Gamma$ be a face. 

\begin{definition}[\cite{Stanley92, Athanasiadis12b}] \label{def:special} A linear system of parameters $\theta_1, \dotsc, \theta_{d}$ for $k[\lk_{\Gamma}(E)]$ is \textit{special} if, for each vertex $v \in V$ with $v \not \in \sigma(E)$, there is an element $\theta_v$ of the l.s.o.p. such that $\supp(\theta_v)$ consists of vertices in $\lk_{\Gamma}(E)$  whose carrier contains $v$, and such that $\theta_v \not= \theta_{v'}$ for $v \not= v'$. 
\end{definition}

In other words, after reordering so that $\sigma(E)^c = \{v_1, \ldots, v_b\}$, an l.s.o.p. for $k[\lk_\Gamma(E)]$ is special if we can order it $\theta_1, \ldots, \theta_d$ such that
\[
\supp(\theta_i) \subset \{ w \in \lk_\Gamma(E) : v_i \in \sigma(w)\},
\]
for $1 \leq i \leq b$.  The existence of special l.s.o.p.s is well-known to experts and the proof is similar to Stanley's argument in the case $E = \emptyset$. For completeness, we provide a short proof.

\begin{proposition} \label{prop:speciallsopexistence}
Suppose $k$ is infinite. Let $\sigma\colon \Gamma \to 2^V$ be a quasi-geometric homology triangulation of a simplex, and let $E$ be a face of $\Gamma$. Then there is a special l.s.o.p. for $k[\lk_{\Gamma}(E)]$.
\end{proposition}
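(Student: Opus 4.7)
The plan is to reduce the existence of a special l.s.o.p.\ to Lemma~\ref{lemma:lsopexistence} by exhibiting candidate supports $S_1, \ldots, S_d$ that satisfy Hall's marriage inequality. Set $n = |V|$, $d = n - |E|$, $b = n - |\sigma(E)|$, and, after relabeling, write $\sigma(E)^c = \{v_1, \ldots, v_b\}$. For $1 \leq i \leq b$ I would take
$$S_i := \{w \in \lk_\Gamma(E) : v_i \in \sigma(w)\},$$
and for $b < i \leq d$ I would let $S_i$ be the full vertex set of $\lk_\Gamma(E)$. Any l.s.o.p.\ with $\supp(\theta_i) = S_i$ is then special by construction, so the entire proof reduces to verifying the inequality~\eqref{eq:marriageinequality} for every face $F \in \lk_\Gamma(E)$.

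The case $F = \emptyset$ is trivial, so fix a nonempty $F$. All $d - b$ of the sets $S_{b+1}, \ldots, S_d$ meet $F$, and for $i \leq b$ the set $S_i$ meets $F$ if and only if $v_i \in \sigma(F)$; here I would use the identity $\sigma(F) = \bigcup_{w \in F} \sigma(w)$, which follows immediately from the fact that each $\Gamma_U$ is a subcomplex of $\Gamma$. Summing the two contributions yields
$$|\{S_i : S_i \cap F \neq \emptyset\}| = (d - b) + |\sigma(F) \setminus \sigma(E)| = |\sigma(F \sqcup E)| - |E|.$$

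The main step, and essentially the only place the hypotheses enter, is to compare this count with $|F|$ using quasi-geometricity. Unpacking the definition with $U = \sigma(G)$ shows that $\sigma\colon \Gamma \to 2^V$ is quasi-geometric if and only if $|\sigma(G)| \geq |G|$ for every $G \in \Gamma$. Applying this to $G = F \sqcup E$ gives $|\sigma(F \sqcup E)| \geq |F| + |E|$, which rearranges to exactly the inequality needed. I do not expect any further obstacle: the whole point of the construction is that the marriage condition on these specific supports is a direct translation of quasi-geometricity, and once the inequality is established Lemma~\ref{lemma:lsopexistence} supplies the desired l.s.o.p.
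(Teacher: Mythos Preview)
Your approach is the same as the paper's: define the same candidate supports $S_i$ and verify the marriage inequality via quasi-geometricity, then invoke Lemma~\ref{lemma:lsopexistence}.  The one point that deserves care is the identity $\sigma(F) = \bigcup_{w \in F} \sigma(w)$.  Knowing that each $\Gamma_U$ is a subcomplex only gives closure under passing to faces, hence $\sigma(w) \subset \sigma(F)$ for $w \in F$; it does \emph{not} say that a simplex all of whose vertices lie in $\Gamma_U$ must itself lie in $\Gamma_U$, which is what the reverse inclusion would need.  If that inclusion failed, your displayed count would be an overcount of $|\{S_i : S_i \cap F \neq \emptyset\}|$, and bounding the overcount from below by $|F|$ would not suffice.

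The paper sidesteps this entirely by never invoking $\sigma(F)$ for a non-vertex $F$: since the quasi-geometric condition is itself phrased in terms of vertex carriers, one works directly with $\bigcup_{w \in E \sqcup F} \sigma(w)$ and bounds its cardinality by $|E|+|F|$.  Your computation then becomes
\[
|\{S_i : S_i \cap F \neq \emptyset\}| \;=\; (d-b) + \Big|\Big(\bigcup_{w\in F}\sigma(w)\Big)\setminus\sigma(E)\Big| \;\geq\; \Big|\bigcup_{w\in E\sqcup F}\sigma(w)\Big| - |E| \;\geq\; |F|,
\]
and nothing else changes.  (Relatedly, only the forward implication of your ``if and only if'' characterization of quasi-geometricity is justified by taking $U=\sigma(G)$; the cleaner and fully justified version is $\big|\bigcup_{w\in G}\sigma(w)\big|\geq |G|$ for all $G\in\Gamma$, which is exactly what the paper uses.)
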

\begin{proof}

Let $V = \{v_1, \ldots, v_n\}$.  After renumbering, we may assume that $\sigma(E)^c = \{v_1, \dotsc, v_b\}$. Fix $d = n - \vert E \vert$. Note that $b \leq d$.  We define subsets $S_1, S_2, \dotsc, S_{d}$ of the vertices in $\lk_{\Gamma}(E)$, as follows. For $i \le b$, let $S_i$ be the set of vertices $w$ such that $v_i \in \sigma(w)$. 
For $i > b$, let $S_i$ be the set of all vertices of $\lk_{\Gamma}(E)$. Because $\sigma$ is quasi-geometric, for each face $F$ of $\lk_{\Gamma}(E)$, the union of the sets $\sigma(w) \subset V$, as $w$ ranges over vertices of $E \sqcup F$, has size at least $|E| + |F|$.  It follows that $|\{i \leq b : S_i \cap F \neq \emptyset \}| \geq |F| - (d-b)$.  Since $S_j \cap F \neq \emptyset$ for $j > b$, we conclude that $|\{i : S_i \cap F \neq \emptyset \}| \geq |F|$.  Hence, by Lemma~\ref{lemma:lsopexistence}, there is an l.s.o.p. $\theta_1, \ldots, \theta_d$ for $k[\lk_\Gamma(E)]$ with $\supp(\theta_i) = S_i$. 
\end{proof}

\section{A resolution of the local face module}

In this section, we prove Theorem~\ref{thm:resolution}, giving an explicit resolution of the local face module $L(\Gamma,E)$ by a subcomplex of the Koszul resolution of $k[\lk_{\Gamma}(E)]/(\theta_1, \dotsc, \theta_{d})$.  We continue to use the notation established above.  In particular, $\sigma\colon \Gamma \to 2^V$ is a quasi-geometric homology triangulation of the simplex with vertex set $V = \{v_1, \ldots, v_n\}$. We consider a face $E \in \Gamma$ with $d = n - |E|$ and $b = n - |\sigma(E)|$.  After reordering, we assume $\sigma(E)^c = \{v_{1}, \dotsc, v_{b}\}$. For $S \subset \{v_1, \ldots, v_d\}$, 
we consider the ideal $I_S \subset k[\lk_\Gamma(E)]$ given by
\begin{equation*}
I_S := ( x^F : \, \sigma(F \sqcup E)^c \subset S).  
\end{equation*}

Let $\theta_1, \ldots \theta_d$ be a special l.s.o.p. for $k[\lk_\Gamma(E)]$. We may assume that 
\[
\supp(\theta_i) \subset \{ w \in \lk_\Gamma(E) : v_i \in \sigma(w) \},
\]
for $1 \leq i \leq b$.  For any $v_i \in S$, multiplication by $\theta_i$ gives a map $\lambda_i \colon I_{S} \to I_{S \smallsetminus \{v_i\}}$, and we consider the complex of graded $k[\lk_{\Gamma}(E)]$-modules
 \begin{equation} \label{eq:resolution}
 0  \to k[\lk_{\Gamma}(E)][-d] \to \bigoplus_{ |S|  = d - 1 } I_{S}[-(d-1)] \to \dotsb  \to \bigoplus_{|S| = 1} I_{S}[-1] \to I \to L(\Gamma,E) \to 0,
\end{equation}
in which
 the differential restricted to $I_S$, for $S= \{v_{i_0}, \ldots, v_{i_k}\}$, with $i_0 < \cdots < i_k$, is $\oplus_{j = 0}^k (-1)^j \lambda_{i_j}$.

\begin{example}
If $E$ is an interior face of $\Gamma$ then every l.s.o.p. is special, $I_S = k[\lk_{\Gamma}(E)]$ for all $S$, and \eqref{eq:resolution} is the Koszul resolution of $L(\Gamma,E) = k[\lk_{\Gamma}(E)]/(\theta_1,\ldots,\theta_{d})$.
\end{example}

\begin{proof}[Proof of Theorem \ref{thm:resolution}]
We must show \eqref{eq:resolution} is exact.
 We begin by considering two complexes of $k[\lk_{\Gamma}(E)]$-modules
 studied by Stanley and Athanasiadis. Recall that, for $U \subset V$, we write $\Gamma_U := \sigma^{-1}(2^U)$.
 
Say $U \supset \sigma(E)$ and $U \smallsetminus \sigma(E) = \{v_{i_0}, \ldots, v_{i_k}\}$, with $i_0 < \cdots < i_k$. For $0 \leq j \leq k$, let $\rho_j \colon k[\lk_{\Gamma_U}(E)] \to k[\lk_{\Gamma_{U \smallsetminus \{v_{i_j}\}}}(E)]$ be the restriction map.  The first complex we consider is 
\small
\begin{equation}\label{eq:modcomp}
\begin{tikzcd}
 k[\lk_{\Gamma}(E)]  \arrow[r] & \bigoplus\limits_{\substack{U \supset \sigma(E)  \\ \vert U \vert = n - 1}} k[\lk_{\Gamma_U}(E)] \arrow[r]  &\bigoplus\limits_{\substack{U \supset \sigma(E)  \\ \vert U \vert = n - 2}}  k[\lk_{\Gamma_U}(E)] \arrow[r]  &\cdots \arrow[r] & k[\lk_{\Gamma_{\sigma(E)}}(E)] \arrow[r] & 0,
\end{tikzcd}
\end{equation}
\normalsize
in which the differential restricted to $k[\lk_{\Gamma_U}(E)]$ is $\bigoplus_j (-1)^j \rho_j$.  Next, we consider its quotient by $(\theta_1,\ldots,\theta_{d})$:

\begin{equation}\label{eq:quotientedcomplex}
\begin{tikzcd}
 \frac{k[\lk_{\Gamma}(E)]}{(\theta_1, \dotsc, \theta_{d})} \arrow[r] & \bigoplus\limits_{\mathclap{\substack{U \supset \sigma(E)  \\ \vert U \vert = n-1}}} \frac{k[\lk_{\Gamma_{U}}(E)]}{(\theta_1, \dotsc, \theta_{d})} \arrow[r]  & \bigoplus\limits_{\mathcal{\substack{U \supset \sigma(E)  \\ \vert U \vert = n-2}}} \frac{k[\lk_{\Gamma_{U}}(E)]}{(\theta_1, \dotsc, \theta_{d})} \arrow[r] &\cdots \arrow[r] & \frac{k[\lk_{\Gamma_{\sigma(E)}}(E)]}{(\theta_1, \dotsc, \theta_{d})} \arrow[r]  &0.
 \end{tikzcd}
\end{equation}

For any $U \subset V$, with $U \supset \sigma(E)$, let $S_U$ be defined as
\[
S_U := (U \cap \{v_1, \ldots, v_b\}) \cup \{ v_{b+1}, \ldots, v_{d}\}.
\]
Then $\dim k[\lk_{\Gamma_U}(E)] = |S_U|$ and it follows that the restriction of $\theta_i$ to $\lk_{\Gamma_U}(E)$ is nonzero if and only if $v_i \in S_U$. Furthermore, $\{ \theta_i|_{\lk_{\Gamma_U}(E)} : v_i \in S_U \}$ is a special l.s.o.p. for $k[\lk_{\Gamma_{U}}(E)]$.  Stanley and Athanasiadis proved that both \eqref{eq:modcomp} and \eqref{eq:quotientedcomplex} are exact, and the kernel of the first arrow in \eqref{eq:quotientedcomplex} is $L(\Gamma,E)$. (We will recall the proofs below.) Using the additivity of Hilbert functions in exact sequences, they deduced that the Hilbert function of $L(\Gamma, E)$ satisfies \eqref{eq:localh} \cite{Stanley92, Athanasiadis12}.

With the goal of proving that \eqref{eq:resolution} is exact, we take Koszul resolutions of each term in \eqref{eq:quotientedcomplex} to build a double complex of $k[\lk_{\Gamma}(E)]$-modules. Since $k[\lk_{\Gamma_{U}}(E)]$ is Cohen-Macauley, the special l.s.o.p.  $\{ \theta_i|_{\lk_{\Gamma_U}(E)} : v_i \in S_U \}$ is a regular sequence. Hence the corresponding Koszul  complex $K^{\bullet}_U$
\begin{center}
\begin{tikzcd}[column sep = small]
0 \arrow[r] &k[\lk_{\Gamma_U}(E)]_{S_U} \arrow[r] &\bigoplus\limits_{\mathclap{\substack{S \subset S_U \\ \vert S \vert = \vert S_U \vert - 1}}} k[\lk_{\Gamma_U}(E)]_S \arrow[r]& \cdots \arrow[r] &\bigoplus\limits_{\mathclap{\substack{S\subset S_U\\ \vert S \vert = 1}}} k[\lk_{\Gamma_U}(E)]_S \arrow[r]  &k[\lk_{\Gamma_U}(E)] \arrow[r]& \frac{k[\lk_{\Gamma_U}(E)]}{(\theta_1, \dotsc, \theta_{d})}  \arrow[r] &0,
\end{tikzcd}
\end{center}
is exact.  Here, for a graded module $M$ and a finite set $S$, we write $M_{S} := M[-|S|]$. Replacing each term in \eqref{eq:quotientedcomplex} with its corresponding Koszul resolution, gives a complex of complexes
\begin{equation}\label{eq:koszul}
\begin{tikzcd}
K_{V}^{\bullet} \arrow[r] & \bigoplus\limits_{\mathclap{\substack{U \supset \sigma(E) \\ \vert U \vert = n - 1}}} K_U^{\bullet} \arrow[r] & \bigoplus\limits_{\mathclap{\substack{U \supset \sigma(E) \\ \vert U \vert = n - 2}}} K_U^{\bullet} \arrow[r] & \cdots \arrow[r] & K_{\sigma(E)}^{\bullet} \arrow[r] & 0,
\end{tikzcd}
\end{equation}
which may be expanded as the commuting double complex shown in Figure~\ref{fig:doublecx}.
\begin{figure}[h!] 
\begin{center}
 \begin{tikzcd}[column sep = tiny]
 0  &0  &0 & \cdots  & 0  \\
 \frac{k[\lk_{\Gamma}(E)]}{(\theta_1, \dotsc, \theta_{d})} \arrow[r] \arrow[u] & \bigoplus\limits_{\mathclap{\substack{U \supset \sigma(E)  \\ \vert U \vert = n-1}}} \frac{k[\lk_{\Gamma_{U}}(E)]}{(\theta_1, \dotsc, \theta_{d})} \arrow[r] \arrow[u] & \bigoplus\limits_{\mathcal{\substack{U \supset \sigma(E)  \\ \vert U \vert = n-2}}} \frac{k[\lk_{\Gamma_{U}}(E)]}{(\theta_1, \dotsc, \theta_{d})} \arrow[r] \arrow[u] &\cdots \arrow[r] & \frac{k[\lk_{\Gamma_{\sigma(E)}}(E)]}{(\theta_1, \dotsc, \theta_{d})} \arrow[r] \arrow[u] &0 \\
 k[\lk_{\Gamma}(E)]  \arrow[r] \arrow[u] & \bigoplus\limits_{\substack{U \supset \sigma(E)  \\ \vert U \vert = n - 1}} k[\lk_{\Gamma_U}(E)] \arrow[r] \arrow[u] &\bigoplus\limits_{\substack{U \supset \sigma(E)  \\ \vert U \vert = n - 2}}  k[\lk_{\Gamma_U}(E)] \arrow[r] \arrow[u] &\cdots \arrow[r] & k[\lk_{\Gamma_{\sigma(E)}}(E)] \arrow[r] \arrow[u] & 0 \\
\bigoplus\limits_{\mathclap{\substack{\vert S \vert = 1}}}k[\lk_{\Gamma}(E)]_S \arrow[r] \arrow[u] & \bigoplus\limits_{\substack{U \supset \sigma(E)  \\ \vert U \vert = n - 1}} \quad  \bigoplus\limits_{\mathclap{\substack{S \subset S_U \\ \vert S \vert = 1}}} k[\lk_{\Gamma_U}(E)]_S \arrow[r] \arrow[u] &\bigoplus\limits_{\substack{U \supset \sigma(E)  \\ \vert U \vert = n - 2}} \quad  \bigoplus\limits_{\mathclap{\substack{S \subset S_U \\ \vert S \vert = 1}}} k[\lk_{\Gamma_U}(E)]_S \arrow[r] \arrow[u] &\cdots \arrow[r] & \bigoplus\limits_{\mathclap{\substack{S \subset S_{\sigma(E)} \\ \vert S \vert =  1}}} k[\lk_{\Gamma_{\sigma(E)}}(E)]_S \arrow[r] \arrow[u] & 0 \\
\bigoplus\limits_{\mathclap{\substack{\vert S \vert = 2}}}k[\lk_{\Gamma}(E)]_S \arrow[r] \arrow[u] & \bigoplus\limits_{\substack{U \supset \sigma(E)  \\ \vert U \vert = n - 1}} \quad  \bigoplus\limits_{\mathclap{\substack{S \subset S_U \\ \vert S \vert = 2}}} k[\lk_{\Gamma_U}(E)]_S \arrow[r] \arrow[u] & \bigoplus\limits_{\substack{U \supset \sigma(E)  \\ \vert U \vert = n - 2}} \quad  \bigoplus\limits_{\mathclap{\substack{S \subset S_U \\ \vert S \vert = 2}}} k[\lk_{\Gamma_U}(E)]_S \arrow[r] \arrow[u] &\cdots \arrow[r] & \bigoplus\limits_{\mathclap{\substack{S \subset S_{\sigma(E)} \\ \vert S \vert = 2}}} k[\lk_{\Gamma_{\sigma(E)}}(E)]_S \arrow[r] \arrow[u] & 0. \\
\vdots  \arrow[u] & \vdots \arrow[u]  &\vdots \arrow[u] &\cdots  & \vdots \arrow[u]  \\
\bigoplus\limits_{\mathclap{\substack{\vert S \vert = d -1}}}k[\lk_{\Gamma}(E)]_S \arrow[r] \arrow[u] & \bigoplus\limits_{\mathclap{\substack{U \supset \sigma(E) \\ \vert U \vert = n - 1}}}k[\lk_{\Gamma}(E)]_{S_U} \arrow[u]\arrow[r] &0 \arrow[u] \\
k[\lk_{\Gamma}(E)]_{\{v_1, \ldots, v_d\}} \arrow[u] \arrow[r] &0 \arrow[u] \\
0 \arrow[u]
\end{tikzcd}
\end{center}
\vspace{-15 pt}
\caption{The double complex obtained by taking the Koszul resolution of \eqref{eq:quotientedcomplex}.} \label{fig:doublecx}
\end{figure}
The columns of this complex are exact by construction. We claim that the rows are also exact, and prove this using ideas from \cite[Theorem~4.6]{Stanley92}. First, we show that all rows except for the top row are exact. Choose a subset $S$ of $\{v_1, \ldots, v_d\}$, and consider the piece of the complex indexed by $S$:
\begin{equation}\label{eq:generalS}
\begin{tikzcd}[column sep = small]
 k[\lk_{\Gamma}(E)]_S  \arrow[r]  & \bigoplus\limits_{\substack{ S \subset S_U   \\ \vert U \vert = n - 1}} k[\lk_{\Gamma_U}(E)]_S \arrow[r]  &\bigoplus\limits_{\substack{S \subset S_U \\ \vert U \vert = n - 2}}  k[\lk_{\Gamma_U}(E)]_S \arrow[r]  &\cdots 
 \arrow[r]  & 0.
\end{tikzcd}
\end{equation}

When $S = \emptyset$, we obtain (\ref{eq:modcomp}). Observe that the complex (\ref{eq:generalS}) is multigraded by $\mathbb{N}^m$, where $m$ is the number of vertices of $\lk_{\Gamma}(E)$. Explicitly, $\deg x_1^{\alpha_1} \dotsb x_m^{\alpha_m} = (\alpha_1, \dotsc, \alpha_m)$. Therefore it suffices to show exactness on graded pieces. Fix $\alpha = (\alpha_1, \dotsc, \alpha_m)$. By the definition of the face ring, every term of (\ref{eq:generalS}) will have $0$ in the graded piece corresponding to $\alpha$ unless the set of vertices with $\alpha_i \not= 0$ forms a face $F$, in which case the $\alpha$-graded part can be identified with the augmented cochain complex of a simplex, indexed by all $U$ that contain $\sigma(E) \cup \sigma(F) \cup S$, and hence is exact.

We now recall the proof that the top row of the double complex, \eqref{eq:quotientedcomplex}, is exact.
\begin{center}
\begin{tikzcd}
 \frac{k[\lk_{\Gamma}(E)]}{(\theta_1, \dotsc, \theta_{d})} \arrow[r]  & \bigoplus\limits_{\mathclap{\substack{U \supset \sigma(E)  \\ \vert U \vert = n-1}}} \frac{k[\lk_{\Gamma_{U}}(E)]}{(\theta_1, \dotsc, \theta_{d})} \arrow[r] & \bigoplus\limits_{\mathcal{\substack{U \supset \sigma(E)  \\ \vert U \vert = n-2}}} \frac{k[\lk_{\Gamma_{U}}(E)]}{(\theta_1, \dotsc, \theta_{d})} \arrow[r]  &\cdots \arrow[r] & \frac{k[\lk_{\Gamma_{\sigma(E)}}(E)]}{(\theta_1, \dotsc, \theta_{d})} \arrow[r]  &0
\end{tikzcd}
\end{center}
The proof involves showing that the quotients of (\ref{eq:modcomp}) by $(\theta_{d}, \dotsc, \theta_{d - (r-1)})$ is exact by induction on $r$. The case of $r = 0$ is the exactness of the second row. 

Now assume that (\ref{eq:modcomp}) remains exact after quotienting by $(\theta_{d}, \dotsc, \theta_{d - (r-1)})$. Let $C^i$ denote the $i$th term of (\ref{eq:modcomp}) tensored with $k[\lk_{\Gamma}(E)]/(\theta_{d}, \dotsc, \theta_{d - (r - 1)})$. By the induction hypothesis, we have an exact sequence
\begin{equation*}
C^{\bullet}\colon \enskip  C^0 \to C^1 \to \dotsb \to C^{b} \to 0.
\end{equation*} 
Set $m = d - r$. Recall that 
$\theta_i = 0 \in k[\lk_{\Gamma_{U}}(E)]$ if $v_i \notin S_U$, and that $\{ \theta_i|_{\lk_{\Gamma_U}(E)} : v_i \in S_U \}$ is a special l.s.o.p. for $k[\lk_{\Gamma_{U}}(E)]$. 
Also, for $\sigma(E) \subset U$, $v_m \notin S_U$ if and only if $v_m \notin U$. Hence, we have an exact sequence

\begin{equation} \label{eq:multbym}
0 \to B^{\bullet} \to C^{\bullet} \xrightarrow{\theta_{m}} C^{\bullet} \to C^{\bullet}/(\theta_{m}) \to 0,
\end{equation}
where 
\[
B^{i} = \bigoplus_{\substack{U \supset \sigma(E), \enskip \vert U \vert = n-i \\ v_m \not \in U}}k[\lk_{\Gamma_U}(E)]/(\theta_{d}, \dotsc, \theta_{m + 1}).
\]
For example, when $m > b$, $v_m \in \sigma(E)$ and $B^{\bullet} = 0$.
Up to signs and a degree shift, we can then identify $ B^{\bullet}$ with the complex (\ref{eq:modcomp}) for $\Gamma|_{\{v_m\}^c}$ quotiented by $(\theta_{d}, \dotsc, \theta_{m + 1})$. Then $ B^{\bullet}$ is exact by the induction hypothesis applied to $\Gamma|_{\{v_m\}^c}$. By breaking (\ref{eq:multbym}) up into two short exact sequences we see that $H^i(C^{\bullet}/(\theta_{m})) \cong H^{i + 2}(B^{\bullet}) = 0$ as desired.

Now that we know the exactness of (\ref{eq:koszul}), let 
\begin{equation*} \begin{split}
A^{\bullet} &= \ker \Bigg( K_{V}^{\bullet} \to \bigoplus\limits_{\mathclap{\substack{U \supset \sigma(E) \\ \vert U \vert = n - 1}}} K_U^{\bullet} \Bigg).
\end{split}\end{equation*}
Then, by construction, we have an exact sequence of complexes 
\begin{equation*}
\begin{tikzcd}
0 \arrow[r] & A^{\bullet} \arrow[r] &K_{V}^{\bullet} \arrow[r] & \bigoplus\limits_{\mathclap{\substack{U \supset \sigma(E) \\ \vert U \vert = n - 1}}} K_U^{\bullet} \arrow[r] & \bigoplus\limits_{\mathclap{\substack{U \supset \sigma(E) \\ \vert U \vert = n - 2}}} K_U^{\bullet} \arrow[r] & \cdots \arrow[r] & K_{\sigma(E)}^{\bullet} \arrow[r] & 0.
\end{tikzcd}
\end{equation*}
As above, we repeatedly apply the long exact sequence on cohomology to see that $A^{\bullet}$ is exact. 
We may then identify $A^{\bullet}$ with the exact sequence
$$0  \to k[\lk_{\Gamma}(E)][-n] \to \oplus_{ |S|  = d - 1 } I_{S}[-(n-1)] \to \dotsb  \to \oplus_{|S| = 1} I_{S}[-1] \to I \to A^0 \to 0.$$ Since $I$ surjects onto $A^0$ and $A^0 \subset k[\lk_{\Gamma}(E)]/(\theta_1, \dotsc, \theta_{d})$, we conclude that $A^0 = L(\Gamma,E)$, as required.
\end{proof}

\begin{remark}\label{r:specialcase}
Let $\sigma\colon \Gamma \to 2^V$ be a quasi-geometric homology triangulation of a simplex, and let $E$ be a face of $\Gamma$. Let $F \in \lk_\Gamma(E)$ such that $F \sqcup E$ is interior, and suppose that $F = \mathcal{A}_F$ is an interior partition of $F$, i.e., with $F_1 = F_2 = \emptyset$. Suppose that $F$ is not a $U$-pyramid.  
By Corollary \ref{cor:presentation}, $J$ is generated by elements of the form $\theta_i \cdot x^{F}$ for $F \sqcup E$ interior or $\theta_j \cdot x^G$ for some $G$ with $\sigma(G \sqcup E) = \{v_j\}^c$. Because $F$ is not a $U$-pyramid, no monomial appearing in any of these generators divides $x^F$, so $x^F$ is nonzero in $L(\Gamma, E)$. 
This proves Theorem~\ref{thm:interiornonvanish} in the special case when $F_1 = F_2 = \emptyset$. 
\end{remark}

\section{Functorial properties of local face modules}

In this section, we prove Theorem~\ref{thm:maps}, giving natural maps between local face modules.  
Consider a quasi-geometric homology triangulation $\sigma\colon \Gamma \to 2^V$, and let $E \subset E'$ be faces of $\Gamma$.

\begin{lemma}\label{lemma:lsop}
Let $R$ be a graded $k$-algebra with $R_0 = k$. Let $\{ \theta_1, \dotsc, \theta_n \}$ be an l.s.o.p. for $R[x_1, \dotsc, x_m]$, where each $x_j$ has  degree $1$. Then there is a unique graded $R$-algebra isomorphism 
\[
\phi \colon R[x_1, \dotsc, x_m]/(\theta_1, \dotsc, \theta_n) \rightarrow R/R \cap (\theta_1, \dotsc, \theta_n).
\]
Moreover, any $k$-basis for $R_1 \cap (\theta_1, \dotsc, \theta_n)$ is an l.s.o.p. for $R$ and generates $R \cap (\theta_1, \dotsc, \theta_n)$.  
\end{lemma}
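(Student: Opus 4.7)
My plan is to perform a $k$-linear change of basis among the $\theta_i$'s to put the l.s.o.p. into a standard form that makes the isomorphism $\phi$ transparent.  Write each $\theta_i = \rho_i + \sum_j c_{ij} x_j$ with $\rho_i \in R_1$ and $c_{ij} \in k$, and let $C = (c_{ij})$ be the resulting $n \times m$ coefficient matrix.  The key claim, which I expect to be the main obstacle, is that $C$ has rank exactly $m$.

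To prove the rank claim I use that $S := R[x_1, \dotsc, x_m]$ has Krull dimension $\dim R + m$, so the l.s.o.p. hypothesis forces $n = \dim R + m$.  Suppose for contradiction that $C$ has rank $r < m$.  After a suitable change of basis among the $\theta_i$'s I can arrange that $\theta_1, \dotsc, \theta_{n-r}$ lie in $R_1$; call them $\zeta_1, \dotsc, \zeta_{n-r}$.  The remaining $r$ generators have $k$-linearly independent $x_j$-parts, so after a $k$-linear change of coordinates in the $x_j$'s I may assume $\theta_{n-r+i} = x_i - \rho'_i$ for $i = 1, \dotsc, r$, with $\rho'_i \in R_1$.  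Eliminating $x_1, \dotsc, x_r$ via these relations then gives
\[
S/(\theta_1, \dotsc, \theta_n) \;\cong\; \bigl(R/(\zeta_1, \dotsc, \zeta_{n-r})\bigr)[x_{r+1}, \dotsc, x_m].
\]
Since $R_0 = k$ and $\zeta_i \in R_1$, the quotient $R/(\zeta_1, \dotsc, \zeta_{n-r})$ is nonzero, so this polynomial ring in $m - r > 0$ variables is infinite-dimensional over $k$, contradicting the l.s.o.p. property.  Hence $C$ has rank $m$, and after a change of basis I may assume $\theta_1, \dotsc, \theta_{n-m} \in R_1$ (renamed $\zeta_1, \dotsc, \zeta_{n-m}$) and $\theta_{n-m+j} = x_j - \rho_j$ for some $\rho_j \in R_1$.

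With this normal form in hand, I construct $\phi$ from the $R$-algebra map $S \to R/(\zeta_1, \dotsc, \zeta_{n-m})$ that restricts to the quotient on $R$ and sends $x_j \mapsto [\rho_j]$; this kills every $\theta_i$ and factors through $S/J$.  An inverse arises from the composition $R \hookrightarrow S \twoheadrightarrow S/J$, which kills each $\zeta_i$ and thus factors through $R/(\zeta_1, \dotsc, \zeta_{n-m})$.  These maps are mutually inverse by direct check on the generators $\{r, x_j\}$.  Uniqueness is automatic: in $S/J$ we have $[x_j] = [\rho_j]$, so any graded $R$-algebra map $S/J \to R/R\cap J$ is forced to send $[x_j] \mapsto [\rho_j]$.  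The identification $R \cap J = (\zeta_1, \dotsc, \zeta_{n-m})$ then falls out: the containment $\supset$ is clear, and the composition $R \to S/J \xrightarrow{\phi} R/(\zeta_1, \dotsc, \zeta_{n-m})$ is the natural surjection, so its kernel equals the claimed ideal.

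For the final clause, note that the $\zeta_i$'s are $k$-linearly independent, since any $k$-linear dependence would reduce $J$ to an ideal generated by fewer than $n$ elements, contradicting the fact that an l.s.o.p. of $S$ must have length $\dim S = n$.  By homogeneity, every element of $R_1 \cap J = R_1 \cap (\zeta_1, \dotsc, \zeta_{n-m})$ is a $k$-linear combination of the $\zeta_i$'s, so $\{\zeta_i\}$ is a $k$-basis for $R_1 \cap J$.  Any other $k$-basis differs from $\{\zeta_i\}$ by an invertible linear change of coordinates, hence generates the same ideal $R \cap J$ and remains an l.s.o.p. for $R$, since $R/(\zeta_1, \dotsc, \zeta_{n-m}) \cong S/J$ is finite-dimensional over $k$.
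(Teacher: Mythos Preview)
Your proof is correct and follows essentially the same line as the paper's. Both arguments hinge on the fact that each $x_j$ is congruent modulo $(\theta_1,\dots,\theta_n)$ to an element of $R_1$: the paper phrases this as surjectivity of the projection $(\theta_1,\dots,\theta_n)_1 \to (x_1,\dots,x_m)_1$ in a short exact sequence, while you phrase it as the coefficient matrix $C$ having rank $m$ and then pass to an explicit normal form via row operations and a change of $x$-coordinates. Having written $x_j \equiv \rho_j$, both proofs define $\phi(x_j) = \rho_j$, exhibit the natural map $R/R\cap J \to S/J$ as its inverse, and read off that $R\cap J$ is generated in degree~$1$. Your presentation is somewhat more computational and the paper's somewhat more conceptual, but the underlying argument is the same. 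One small remark: your justification that the $\zeta_i$ are independent (``otherwise $J$ would be generated by fewer than $n$ elements, contradicting that an l.s.o.p.\ has length $\dim S$'') tacitly invokes the standard dimension-theoretic fact that fewer than $\dim S$ linear forms cannot cut $S$ down to finite $k$-dimension; you might make that explicit, or simply note that your change of basis among the $\theta_i$ is invertible, so linear independence of the $\zeta_i$ is inherited from that of the $\theta_i$.
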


\begin{proof}
Consider the exact sequence of $k$-linear maps
\[
0 \rightarrow R_1 \rightarrow R[x_1, \dotsc, x_m]_1 \rightarrow (x_1,\dotsc, x_m)_1 \rightarrow 0,
\]
where the right hand map takes $r + \sum_i \alpha_i x_i$ to $\sum_i \alpha_i x_i$, for any $r \in R_1$ and $\alpha_i \in k$. This restricts to an exact sequence of $k$-linear maps
\[
0 \rightarrow R_1 \cap (\theta_1, \dotsc, \theta_n)_1 \rightarrow (\theta_1, \dotsc, \theta_n)_1 \rightarrow (x_1,\dotsc, x_m)_1 \rightarrow 0,
\]
where the surjectivity of the right-hand map follows from the fact that $\theta_1, \dotsc, \theta_n$ is an l.s.o.p. Hence, for $1 \le i \le m$, we can write $x_i = r_i + s_i$, for some $r_i \in R_1$ and $s_i \in (\theta_1, \dotsc, \theta_n)_1$. For any $R$-algebra map $\phi \colon R[x_1, \dotsc, x_m]/(\theta_1, \dotsc, \theta_n) \rightarrow R/R \cap (\theta_1, \dotsc, \theta_n),$ we must have that $\phi(x_i) = r_i$, so there is a unique such map. On the other hand, the $R$-algebra homomorphism defined by $\phi(x_i) = r_i$ is well-defined, since if $x_i = r_i' + s_i'$, for some $r_i' \in R_1$ and $s_i' \in (\theta_1, \dotsc, \theta_n)_1$, then $r_i - r_i' \in R_1 \cap (\theta_1, \dotsc, \theta_n)_1$. Note that the unique $R$-algebra homomorphism from $R/R \cap (\theta_1, \dotsc, \theta_n)$ to  $R[x_1, \dotsc, x_m]/(\theta_1, \dotsc, \theta_n)$ is the inverse of $\phi$.

Since $\phi$ is an isomorphism and factors through $R/(R_1 \cap (\theta_1, \dotsc, \theta_n)_1)$, we conclude that the $R$-ideal $R \cap (\theta_1, \dotsc, \theta_n)$ is generated in degree $1$ and hence any $k$-basis for $R_1 \cap (\theta_1, \dotsc, \theta_n)$ is an l.s.o.p. for $R$.
\end{proof}

\begin{proof}[Proof of Theorem~\ref{thm:maps}]
Note that $\Star(E' \smallsetminus E)$ is the join of $E' \smallsetminus E$ with $\lk_{\Gamma}(E')$. The face ring $k[\Star(E' \smallsetminus E)]$ is therefore a polynomial ring over $k[\lk_\Gamma(E')]$. Its Krull dimension is equal to $d = \dim k[\lk_\Gamma(E)]$, and hence the restrictions $\theta'_1, \ldots, \theta'_d$ form an l.s.o.p., where $\theta'_i := \theta_i|_{\Star(E' \smallsetminus E)}$.  By Lemma~\ref{lemma:lsop}, there is a unique graded $k[\lk_\Gamma(E')]$-algebra homomorphism $k[\Star(E' \smallsetminus E)]/(\theta'_1, \ldots, \theta'_d) \to k[\lk_\Gamma(E')]/ (k[\lk_\Gamma(E')] \cap (\theta'_1, \ldots, \theta'_d))$, which lifts to the unique homomorphism $\phi$ in the statement of the theorem. It remains to construct a special l.s.o.p. for $k[\lk_\Gamma(E')]$ with the specified properties.

After reordering, we may assume that
\[
\sigma(E)^c = \{ v_1, \ldots, v_b\}, \quad \supp(\theta_i) \subset \{ w : v_i \in \sigma(w) \}, \ \mbox{ for } 1 \leq i \leq b, \quad \mbox{ and } \sigma(E')^c = \{v_1, \ldots, v_{b'}\}.
\]
Note, in particular, that $\theta'_i$ is supported on vertices in the link of $E'$, for $1 \leq i \leq b'$. By Lemma~\ref{lemma:lsop}, any $k$-basis for $k[\lk_\Gamma(E')] \cap (\theta'_1, \ldots, \theta'_d)$ is an l.s.o.p. for $k[\lk_\Gamma(E')]$.  Set $\zeta_i = \theta_i|_{\lk_\Gamma(E')}$, for $1 \leq i \leq b'$, and note that $\{\zeta_1, \ldots, \zeta_{b'}\}$ is linearly independent.  Extending this independent set to a basis produces a special l.s.o.p. for $k[\lk_\Gamma(E')]$.  It remains to verify that $\phi(L(\Gamma, E)) \subset L(\Gamma, E')$. Let $F \in \lk_{\Gamma}(E)$ be a face with $F \sqcup E$ interior. If $F$ is not in $\Star(E' \smallsetminus E)$, then $\phi(x^F) = 0$. Otherwise, $F$ can be written uniquely as the join of possibly empty faces $F_1 \subset E' \smallsetminus E$ and $F_2 \in \lk_{\Gamma}(E')$. Then $F_2 \sqcup E'$ is interior, and $\phi(x^F) = \phi(x^{F_1})x^{F_2} \in (x^{F_2})$. Hence $\phi(x^F) \in L(\Gamma,E')$, as required.
\end{proof}

\begin{proof}[Proof of Theorem \ref{thm:increase}]
Let $E \subset E'$ be faces of a 
quasi-geometric homology triangulation $\Gamma$ of a simplex, and assume that $\sigma(E) = \sigma(E')$. It is enough to show that the induced map $\phi \colon L(\Gamma, E) \to L(\Gamma,E')$ given by Theorem~\ref{thm:maps} is surjective. Note that $L(\Gamma,E')$ is generated by the monomials $x^F$ such that $F \in \lk_\Gamma(E')$ and $F \sqcup E'$ is interior.  If $F$ is such a face, then it is also in the link of $E$ and, since $\sigma(E) = \sigma(E')$, the face $(F \sqcup E) < (F \sqcup E')$ is also interior.  Then $\phi(x^F) = x^F$, and the theorem follows. \end{proof}

\section{Restrictions of local face modules}
In this section, we use the resolution found in Theorem~\ref{thm:resolution} to show that the vanishing of a local face module $L(\Gamma, E)$ implies the vanishing of a \emph{restricted local face module} $L(\Gamma, \mathcal{A}_F \sqcup E)|_{F_1 \sqcup F_2},$ for certain interior partitions $F_1 \sqcup F_2  \sqcup  \mathcal{A}_F$.  We then develop algebraic arguments, inspired by ideas from \cite{dMGPSS20}, to show that $F$ being a $U$-pyramid is necessary for the vanishing of the restricted local face module when $\vert F_1 \vert \le 2$ and thus prove Theorem~\ref{thm:interiornonvanish}. 

We use the notation introduced in the introduction. Let $\Delta$ be a subcomplex of $\lk_{\Gamma}(E)$. For any $k[\lk_{\Gamma}(E)]$-module $M$, the \emph{restriction} of $M$ to $\Delta$ is $M|_{\Delta} := M \otimes_{k[\lk_{\Gamma}(E)]} k[\Delta]$, where $k[\Delta]$ is a $k[\lk_{\Gamma}(E)]$-module via the restriction map. By the resolution of $L(\Gamma,E)$ in  Theorem~\ref{thm:resolution} and the right exactness of tensoring with $k[\Delta]$, we have an exact sequence 
\begin{equation}\label{e:restrict}
 \bigoplus_{|S| = 1} I_{S}|_\Delta[-1] \to I|_\Delta \to L(\Gamma,E)|_\Delta \to 0.
\end{equation}
Recall from Corollary~\ref{cor:presentation} that $L(\Gamma, E) \cong I/J$, where $J$ is the ideal generated by $\{\theta_i x^{F} : F \sqcup E \mbox{ is interior}\}$ and $\{\theta_{j} x^G : \sigma(G \sqcup E) = \{v_j\}^c \}$.  Hence, $L(\Gamma,E)|_\Delta \cong I|_\Delta/J|_\Delta$, where $I|_\Delta,J|_\Delta$ are the $k[\Delta]$-ideals
\begin{equation}\label{eq:Ires}
I|_\Delta = (x^H :  H \subset \Delta, \sigma(H \sqcup E) = V),
\end{equation}
\begin{equation}\label{eq:Jres}
J|_\Delta = (\theta_1|_\Delta,\ldots,\theta_{d}|_\Delta) \cdot I|_\Delta + (\theta_{j}|_{\Delta} x^G: \enskip G \subset \Delta, \enskip \sigma(G \sqcup E) = \{v_j\}^c).    
\end{equation}
For example, if $F$ is a face of $\lk_{\Gamma}(E)$, then $k[F]$ is a polynomial ring with variables indexed by the vertices of $F$, and 
$L(\Gamma, E)|_{F}$ is identified with  a quotient of ideals in this polynomial ring.

\begin{lemma}\label{lem:restriction}
Let $\sigma\colon \Gamma \to 2^V$ be a quasi-geometric homology triangulation of a simplex, and let $E$ be a face of $\Gamma$. Let $F \in \lk_{\Gamma}(E)$ be a face with $F \sqcup E$ interior. Assume that $F$ is not a $U$-pyramid. 
Then there is a surjective graded $k[F]$-module homomorphism
$$L(\Gamma, E)|_{F} \to L(\Gamma, \mathcal{A}_F \sqcup E)|_{F \smallsetminus \mathcal{A}_F}[-\vert \mathcal{A}_F \vert],$$
where the second term is a $k[F]$-module via the restriction map  $k[F] \mapsto k[F \smallsetminus \mathcal{A}_F]$.
\end{lemma}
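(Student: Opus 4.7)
The plan is to construct an explicit surjective $k[F]$-module homomorphism
\[
\psi \colon L(\Gamma, E)|_F \to L(\Gamma, E')|_{F'}[-|\mathcal{A}_F|],
\]
where $E' := \mathcal{A}_F \sqcup E$ and $F' := F \smallsetminus \mathcal{A}_F$, which heuristically ``divides by $x^{\mathcal{A}_F}$.'' I will work throughout with the explicit presentations of both modules from the ideal formulas (\ref{eq:Ires}) and (\ref{eq:Jres}).

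The key structural observation is that every face $H \subseteq F$ interior relative to $E$ must contain $\mathcal{A}_F$. Indeed, for each apex $w \in \mathcal{A}_F$, the nonempty set $V_w \subseteq \sigma(w)$ is disjoint from both $\sigma(E)$ and $\sigma(F \cup E \smallsetminus w)$, so $\sigma(H \cup E) = V$ forces $V_w \subseteq \sigma(H)$ and hence $w \in H$. Thus $I|_F$ is generated by monomials $x^H$ with $\mathcal{A}_F \subseteq H$, and I define $\psi$ on such generators by $\psi(x^H) := x^{H \smallsetminus \mathcal{A}_F}$, extended $k[F]$-linearly with the target viewed as a $k[F]$-module via the restriction $k[F] \to k[F']$. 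Well-definedness reduces to checking that if $m_1 x^{H_1} = m_2 x^{H_2}$ as monomials in $k[F]$, then $\overline{m}_1 x^{H_1 \smallsetminus \mathcal{A}_F} = \overline{m}_2 x^{H_2 \smallsetminus \mathcal{A}_F}$ in $k[F']$, which follows by separating the $\mathcal{A}_F$-part from the $F'$-part of each monomial product.

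The hard step is to verify that $\psi(J|_F) \subseteq J'|_{F'}$, and this is where the $U$-pyramid hypothesis enters. Let $\zeta_1, \ldots, \zeta_{d'}$ be the special l.s.o.p.\ for $k[\lk_\Gamma(E')]$ produced by Theorem~\ref{thm:maps}, normalized so that $\zeta_i = \theta_i|_{\lk_\Gamma(E')}$ for $i \le b'$. For the first-type generators $\theta_i|_F \cdot x^H$, the image is $\theta_i|_{F'} \cdot x^{H \smallsetminus \mathcal{A}_F}$; because the $\zeta_j|_{F'}$ span $k[F']_1$ by property $(*)$, the ideal $(\zeta_1|_{F'}, \ldots, \zeta_{d'}|_{F'})$ in $k[F']$ contains every linear form, so the image lies in $J'|_{F'}$. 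For a second-type generator $\theta_j|_F \cdot x^G$ with $\sigma(G \cup E) = \{v_j\}^c$, I split on whether $\mathcal{A}_F \subseteq G$. If $\mathcal{A}_F \subseteq G$, no $w \in \mathcal{A}_F$ can have $v_j \in \sigma(w)$ (else $v_j \in \sigma(G \cup E)$), so only $w \in F'$ contributes to $\theta_j|_F$, and a direct computation gives $\psi(\theta_j|_F \cdot x^G) = \zeta_j|_{F'} \cdot x^{G \smallsetminus \mathcal{A}_F}$, a second-type generator of $J'|_{F'}$ (one checks that $v_j \in \sigma(E')^c$ by a parallel exclusion). If instead $\mathcal{A}_F \not\subseteq G$, pick $w_0 \in \mathcal{A}_F \smallsetminus G$; then $G \cup E \subseteq F \cup E \smallsetminus w_0$ gives $V_{w_0} \subseteq V \smallsetminus \sigma(G \cup E) = \{v_j\}$, forcing $V_{w_0} = \{v_j\}$ and hence $F$ to be a $U$-pyramid, contradicting the hypothesis. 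Thus only the good case arises.

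Finally, surjectivity is immediate: every generator $x^{H'}$ of $L(\Gamma, E')|_{F'}$ with $H' \subseteq F'$ interior relative to $E'$ equals $\psi(x^{H' \sqcup \mathcal{A}_F})$, since $H' \sqcup \mathcal{A}_F$ is automatically interior relative to $E$ via $\sigma(H' \cup \mathcal{A}_F \cup E) = \sigma(H' \cup E') = V$.
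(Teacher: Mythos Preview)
Your argument is correct and follows essentially the same route as the paper's proof. Both use the observation that every face $G \subseteq F$ with carrier codimension at most one (relative to $E$) must contain $\mathcal{A}_F$, which is exactly where the non-$U$-pyramid hypothesis enters. The paper packages the map as the composite ``divide by $x^{\mathcal{A}_F}$, then restrict to $k[F \smallsetminus \mathcal{A}_F]$,'' writing $I|_F = x^{\mathcal{A}_F}\cdot M$ and $J|_F = x^{\mathcal{A}_F}\cdot N$ and then simply asserting that $M|_{F'}/N|_{F'}$ agrees with $L(\Gamma,\mathcal{A}_F\sqcup E)|_{F'}$; you unpack that identification explicitly by invoking the special l.s.o.p.\ $\zeta_1,\ldots,\zeta_{d'}$ from Theorem~\ref{thm:maps} and checking $\theta_j|_{F'}=\zeta_j|_{F'}$ for the relevant $j\le b'$. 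One small remark: your well-definedness check is phrased for single monomial equalities, but the map is most cleanly seen to be well-defined by noting $I|_F\subseteq (x^{\mathcal{A}_F})$ and setting $\psi(\alpha)=\overline{\alpha/x^{\mathcal{A}_F}}$, which is manifestly $k[F]$-linear; this is exactly the paper's description.
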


\begin{proof}
If $\Delta$ is a subcomplex of $\lk_{\Gamma}(E)$ contained in the closed star of $\mathcal{A}_F$, then $x^{\mathcal{A}_F}$ is a non-zero divisor in $k[\Delta]$. In particular, $x^{\mathcal{A}_F}$ is a non-zero divisor in $k[F]$ (this is also clear since $k[F]$ is a polynomial ring). Note that every face of $F$ with carrier codimension at most $1$ contains $\mathcal{A}_F$. Thus $I|_F = x^{\mathcal{A}_F} \cdot M$ and $J|_F = x^{\mathcal{A}_F} \cdot N$, where 
$M$ and $N$ are the ideals in $k[F]$ 
\[
M = (x^H: \enskip H \subset F \smallsetminus \mathcal{A}_F, \enskip \sigma(H \sqcup \mathcal{A}_F \sqcup E) = V),
\]
\[
N = (\theta_1|_F,\ldots,\theta_{d}|_F) \cdot M + (\theta_{j}|_{F} x^G: \enskip G \subset F \smallsetminus \mathcal{A}_F, \enskip \sigma(G \sqcup \mathcal{A}_F \sqcup E) = \{v_j\}^c).
\]
Then we have surjective graded $k[F]$-module homomorphisms
\[
I|_F/J|_F \rightarrow M /N [-|\mathcal{A}_F|] \rightarrow  M|_{F \smallsetminus \mathcal{A}_F}/N|_{F \smallsetminus \mathcal{A}_F}[-|\mathcal{A}_F|], 
\]
where the first map is the isomorphism taking $x^{\mathcal{A}_F} x^H \mapsto x^H$ and the second map is restriction. Finally the right hand term can be identified with $L(\Gamma, \mathcal{A}_F \sqcup E)|_{F \smallsetminus \mathcal{A}_F}[-\vert \mathcal{A}_F \vert]$.
\end{proof}

We will derive Theorem~\ref{thm:interiornonvanish} from  the following more technical statement.

\begin{theorem}\label{thm:localizedcase}
Let $\sigma\colon \Gamma \to 2^V$ be a quasi-geometric homology triangulation, and let $E$ be a face. Let $F \in \lk_{\Gamma}(E)$ be a face with $F \sqcup E$ interior. Suppose $\mathcal{A}_F = \emptyset$ and $F$ admits an interior partition $F = F_1 \sqcup F_2$. Assume that $F$ has no faces $G$ with $G \sqcup E$ interior and $ \vert G \vert < \vert F_1 \vert$. If $\vert F_1 \vert \le 2$, then $L(\Gamma, E)|_{F}$ is non-zero in degree $\vert F_1 \vert$. 
\end{theorem}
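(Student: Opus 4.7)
The plan is to work with the presentation $L(\Gamma,E)|_F \cong I|_F / J|_F$ from Corollary~\ref{cor:presentation} together with the explicit descriptions in \eqref{eq:Ires}--\eqref{eq:Jres}, and to show that $x^{F_1}$ descends to a non-zero class in degree $|F_1|$. I would treat the three possible values $|F_1| \in \{0,1,2\}$ separately. The case $|F_1| = 0$ is immediate: the hypothesis forces $E$ itself to be interior, so $1 = x^{\emptyset} \in (I|_F)_0$, while $(J|_F)_0 = 0$ for degree reasons.

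For $|F_1| = 1$, write $F_1 = \{w_1\}$. I would first argue that $(J|_F)_1$ can be non-zero only when $|\sigma(E)^c| = 1$, in which case it is the one-dimensional span of $\theta_j|_F$ for the unique $j$ with $\sigma(E)^c = \{v_j\}$. In this subcase the assumption $\mathcal{A}_F = \emptyset$ implies that $(F \sqcup E) \smallsetminus w_1$ is interior, so at least one other vertex $w' \in F$ has $v_j \in \sigma(w')$; hence $\dim(I|_F)_1 \geq 2 > \dim(J|_F)_1$. In the alternative subcase $|\sigma(E)^c| \geq 2$ one has $(J|_F)_1 = 0$, so $x_{w_1} \in (I|_F)_1$ already suffices.

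The main case is $|F_1| = 2$, with $F_1 = \{w_1, w_2\}$. I would first rule out $|\sigma(E)^c| \leq 1$: if $\sigma(E)^c = \{v_j\}$, then $\sigma(F_1 \sqcup E) = V$ forces $v_j \in \sigma(w_i)$ for some $i$, making $\{w_i\} \sqcup E$ interior and violating the hypothesis. With $|\sigma(E)^c| \geq 2$, together with the hypothesis giving $(I|_F)_0 = (I|_F)_1 = 0$, a check of the two kinds of generators in \eqref{eq:Jres} shows that every generator of $(J|_F)_2$ has the form $\theta_j|_F \cdot x^w$ for a \emph{special pair} $(j,w)$ satisfying $\sigma(\{w\} \sqcup E) = \{v_j\}^c$, equivalently $\sigma(w) \cap \sigma(E)^c = \sigma(E)^c \smallsetminus \{v_j\}$. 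I would then show $x_{w_1} x_{w_2} \notin (J|_F)_2$ by restricting Theorem~\ref{thm:resolution} to $F$, which yields the right-exact complex
\[
\bigoplus_{|S|=2} I_S|_F[-2] \xrightarrow{\delta_2} \bigoplus_{|S|=1} I_S|_F[-1] \xrightarrow{\delta_1} I|_F \to L(\Gamma,E)|_F \to 0,
\]
and combining the dimension bound $\dim(J|_F)_2 \leq \dim(\text{source of }\delta_1)_2 - \dim \operatorname{image}(\delta_2)_2$ with the lower bound on $\dim(I|_F)_2$ coming from $\mathcal{A}_F = \emptyset$ (which ensures that each $v \in \sigma(E)^c$ lies in the carriers of at least two vertices of $F$, so that sufficiently many pairs $\{u,u'\}\subset F$ have $\sigma(\{u,u'\} \sqcup E) = V$).

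The main obstacle is this dimension inequality in the $|F_1| = 2$ case, since the combinatorics of $A_j := \{w \in F : \sigma(w) \cap \sigma(E)^c = \sigma(E)^c \smallsetminus \{v_j\}\}$ and of the pairs covering $\sigma(E)^c$ is intricate in full generality. A back-up strategy, inspired by the worked example following Theorem~\ref{thm:resolution}, is to directly construct a linear functional on $(I|_F)_2$ that vanishes on every generator $\theta_j|_F \cdot x^w$ but equals $1$ on $x^{F_1}$; the coefficients of such a functional can be determined recursively from the coefficients of the $\theta_j|_F$, and consistency of the recursion should follow from the Koszul-type relations produced by $\delta_2$. Either route reduces the proof to a tractable accounting exploiting that the interior partition $F = F_1 \sqcup F_2$ and the condition $\mathcal{A}_F = \emptyset$ strongly constrain how the sets $A_j$ and the covering pairs can interact.
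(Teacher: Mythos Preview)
Your treatment of $|F_1|\le 1$ is correct and essentially agrees with the paper (your $|F_1|=1$ analysis is in fact slightly more careful about the case $|\sigma(E)^c|=1$).

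For $|F_1|=2$, however, the proposed global dimension count does not go through. Take $b=|\sigma(E)^c|=3$ and $F=\{w_1,w_2,w_3,w_4\}$ with
\[
\tau(w_1)=\tau(w_2)=\{v_1,v_2\},\qquad \tau(w_3)=\tau(w_4)=\{v_1,v_3\},
\]
where $\tau(w):=\sigma(w)\cap\sigma(E)^c$. Then $\mathcal{A}_F=\emptyset$, the partition $F_1=\{w_1,w_3\}$, $F_2=\{w_2,w_4\}$ is interior, and there are no interior vertices. Here the number of interior edges equals the number of codimension-one vertices, namely $Q=P=4$; moreover for $b\ge 3$ the source of $\delta_2$ in degree~$2$ vanishes, so your bound reads $\dim(J|_F)_2\le P=4=\dim(I|_F)_2$ and yields nothing. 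What actually forces $(L(\Gamma,E)|_F)_2\neq 0$ in this example is the relation
\[
\theta_2|_F\cdot(\theta_3|_F\,x_{w_1})+\theta_2|_F\cdot(\theta_3|_F\,x_{w_2})
-\theta_3|_F\cdot(\theta_2|_F\,x_{w_3})-\theta_3|_F\cdot(\theta_2|_F\,x_{w_4})=0
\]
(suitably normalized), which lies in $\ker(\delta_1)_2$ but \emph{not} in the image of $\delta_2$, since restriction to $F$ only preserves right-exactness. For the same reason your backup plan, which hopes the needed consistency relations are ``the Koszul-type relations produced by $\delta_2$'', cannot succeed: the crucial relation is invisible to the restricted complex.

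The paper handles $|F_1|=2$ by a different mechanism. It introduces the \emph{internal edge graph} of $F$ and proves two structural lemmas: if $(L(\Gamma,E)|_F)_2=0$ then (i) each connected component is a tree with at most one vertex whose $\{w\}\sqcup E$ has carrier codimension $>1$, or a unicyclic graph all of whose vertices have codimension~$1$ (this is the componentwise refinement of your count $Q\le P$); and (ii) no component contains a $4$-cycle, ruled out precisely by the commutativity relation $\theta_i\theta_j=\theta_j\theta_i$ above. A short case analysis using the interior partition then contradicts (i)--(ii). So the missing idea in your proposal is to localize the dimension count to components and to exploit the extra syzygy coming from commuting $\theta_i$'s, rather than the image of $\delta_2$.
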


\begin{example}\label{example:nonrestriction}
The conclusion of Theorem~\ref{thm:localizedcase} can fail when $\vert F_1 \vert \ge 3$, even for $\mathcal{A}_F = E = \emptyset$.  Consider a geometric triangulation $\sigma \colon \Gamma \to 2^V$, where $V = \{v_1, \ldots, v_6 \}$ with a face $F = \{w_1, \ldots, w_6\}$ such that 
\[
\begin{array}{lll}
\sigma(w_1) = \{ v_1, v_3, v_6 \} & \sigma(w_2) = \{ v_1, v_4, v_5 \} & \sigma(w_3) = \{ v_2, v_3, v_5 \} \\
\sigma(w_4) = \{ v_2, v_4, v_6 \} & \sigma(w_5) = \{ v_3, v_4, v_5 \} & \sigma(w_6) = \{ v_3, v_5, v_6 \}
\end{array}
\]
Then $\mathcal{A}_F = \emptyset$, and $F$ admits an interior partition given by $F_1 = \{w_1, w_4, w_5\}$, $F_2 = \{w_2, w_3, w_6\}$. Then \eqref{e:restrict} gives generators and relations for $L(\Gamma, \emptyset)|_{F}$, and a linear algebra computation shows that $L(\Gamma, \emptyset)|_{F} = 0$. 
\end{example}

Before proceeding with the proof of Theorem~\ref{thm:localizedcase}, we show how Theorem~\ref{thm:interiornonvanish} follows from it. 

\begin{proof}[Proof of Theorem~\ref{thm:interiornonvanish}]
We may assume that $F = F_1' \sqcup F_2' \sqcup \mathcal{A}_F$ is an interior partition of $F$ with $\vert F_1' \vert$ minimal among all possible interior partitions of $F$. In particular, if $\vert F_1' \vert = 2$, then there is no vertex $v \in F  \smallsetminus \mathcal{A}_F$ such that $\{v\} \sqcup \mathcal{A}_F \sqcup E$ is interior, as then $\{v\} \sqcup (F_1' \sqcup F_2' \smallsetminus \{v\}) \sqcup \mathcal{A}_F$ would be an interior partition. Hence there are no faces $G$ of $F \smallsetminus \mathcal{A}_F$ with $G \sqcup \mathcal{A}_F \sqcup E$ interior and with cardinality smaller than $\vert F_1' \vert$. 
By Theorem \ref{thm:localizedcase}, $L(\Gamma, \mathcal{A}_F \sqcup E)|_{F_1' \sqcup F_2'}$ is non-zero in degree $\vert F_1' \vert$. Then, by Lemma~\ref{lem:restriction},  $L(\Gamma, E)$ is nonzero in degree $\vert F_1' \vert + \vert \mathcal{A}_F \vert$.
\end{proof}

We now proceed with the proof of Theorem~\ref{thm:localizedcase}. We begin with a series of three lemmas. Inspired by the results of \cite{dMGPSS20} in the case $E = \emptyset$, we consider the \emph{internal edge graph} of a subcomplex $\Delta \subset \lk_{\Gamma}(E)$. This is the graph contained in the $1$-skeleton of $\lk_{\Gamma}(E)$ consisting of edges $e \subset \Delta$ with $e \sqcup E$ interior. 

\begin{lemma}\label{lem:intedge}
Assume $\sigma(E)$ has codimension at least $2$. Let $\Delta$ be a subcomplex of $\lk_{\Gamma}(E)$, and assume $\Delta$ has no vertices $v$ with $\{v\} \sqcup E$ interior. If $L(\Gamma, E)|_{\Delta}$ is zero in degree $2$, then each connected component of the internal edge graph of $\Delta$ satisfies one of the following.
\begin{enumerate}
\item The component is a tree, and it has at most one  vertex $v$ with $\{v\} \sqcup E$ having carrier codimension more than $1$. 
\item The component has a unique cycle, and the carrier codimension of $\{w\} \sqcup E$ is equal to $1$ for every vertex $w$ in the component. 
\end{enumerate}
\end{lemma}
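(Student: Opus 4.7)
The plan is to extract the degree-two part of the presentation of $L(\Gamma, E)|_\Delta$ from (\ref{eq:Ires}) and (\ref{eq:Jres}), and then argue by counting on each connected component of the internal edge graph $G$ of $\Delta$.

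First, I would note that $I|_\Delta$ in degree $2$ has as a basis the monomials $x^H$ for internal edges $H$ of $\Delta$, since these are precisely the size-$2$ faces $H \subset \Delta$ with $\sigma(H \sqcup E) = V$. The hypothesis that no vertex $v \in \Delta$ satisfies $\{v\} \sqcup E$ interior forces $I|_\Delta = 0$ in degree $1$, so the summand $(\theta_1|_\Delta, \dotsc, \theta_d|_\Delta) \cdot I|_\Delta$ contributes nothing to $J|_\Delta$ in degree $2$. Hence $J|_\Delta$ in degree $2$ is spanned by the relations $\theta_j|_\Delta \cdot x^v$ for vertices $v \in \Delta$ with $\sigma(\{v\} \sqcup E) = \{v_j\}^c$; call these codim-$1$ vertices. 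Since $v_j \in \sigma(E)^c$, we have $j \leq b$, so the special l.s.o.p.\ support condition $\supp(\theta_j) \subset \{w \in \lk_\Gamma(E) : v_j \in \sigma(w)\}$ applies. A short carrier computation then shows $\theta_j|_\Delta \cdot x^v = \sum_w c_w x^{\{v,w\}}$, summed over neighbors $w$ of $v$ in $G$: if $\{v,w\} \in \Delta$ and $v_j \in \sigma(w)$, then $\sigma(\{v,w\} \sqcup E) \supseteq \{v_j\}^c \cup \{v_j\} = V$, so $\{v,w\}$ is an internal edge.

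Because each relation is supported on edges incident to a single vertex of $G$, both $J|_\Delta$ and the quotient $L(\Gamma, E)|_\Delta$ decompose in degree $2$ as direct sums indexed by the connected components of $G$. For each component $C$, let $n_C$, $e_C$, and $n_{1,C}$ denote respectively the numbers of vertices, edges, and codim-$1$ vertices. Vanishing on $C$ requires $e_C \leq n_{1,C} \leq n_C$. If $C$ is a tree, then $e_C = n_C - 1$, so at most one vertex of $C$ has carrier codimension greater than one, giving case (1). If $C$ contains a cycle, then $e_C \geq n_C$ forces $e_C = n_C$ (so $C$ has first Betti number one, hence a unique cycle) and $n_{1,C} = n_C$ (so every vertex has carrier codimension exactly one), giving case (2).

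The main step requiring care is justifying that each relation $\theta_j|_\Delta \cdot x^v$ really lies in the edge-span of the single component of $G$ containing $v$, rather than involving edges elsewhere; this is exactly where the special l.s.o.p.\ hypothesis enters, through the carrier computation above. Once this decomposition over components is in hand, the remainder is elementary graph-theoretic counting.
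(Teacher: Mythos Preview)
Your proposal is correct and follows essentially the same approach as the paper's proof. The paper works directly with the degree-$2$ piece of the exact sequence \eqref{e:restrict}, while you phrase the same computation via the presentation $L(\Gamma,E)|_\Delta \cong I|_\Delta/J|_\Delta$ from \eqref{eq:Ires}--\eqref{eq:Jres}; these are equivalent, and the carrier computation showing each relation $\theta_j|_\Delta \cdot x^v$ is supported on internal edges incident to $v$, followed by the component-by-component count $e_C \le n_{1,C} \le n_C$, matches the paper exactly.
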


\begin{proof}
From \eqref{e:restrict}, we have the following exact sequence for the degree $2$ part of $L(\Gamma, E)|_{\Delta}$.
$$ \bigoplus_{\vert S \vert = 1} (I_S)_1\otimes_{k[\lk_{\Gamma}(E)]} k[\Delta]  \to I_2\otimes_{k[\lk_{\Gamma}(E)]} k[\Delta] \to (L(\Gamma, E)|_{\Delta})_2 \to 0.$$
Because $(L(\Gamma, E)|_{\Delta})_2 = 0$, the first map in the above complex is surjective.  As $\Delta$ has no vertices $v$ with $\{v\} \sqcup E$ interior, we see that
\begin{equation}\label{eq:intedge}
(x^{e}: e \subset \Delta, \enskip e \sqcup E \text{ is interior })_2 = (x^{\{v\}} \theta_{i}: v \subset \Delta, \enskip \sigma(\{v\} \sqcup E) = \{v_i\}^c)_2.
\end{equation}
Thus the number of edges $e$ with $e \sqcup E$ interior is less than or equal to the number of vertices $w$ with the carrier codimension of $\{w\} \sqcup E$ equal to $1$. If $\sigma(\{v\} \sqcup E) = \{v_i\}^c$ and $\theta_{i} = \sum_{w_j} a_{i,j}x^{\{w_j\}}$, then 
$$x^{\{v\}} \theta_{i} = \sum_{\{v, w_j\} \sqcup E \text{ interior }} a_{i,j}x^{\{v, w_j\}}.$$ In particular, both vector spaces in (\ref{eq:intedge}) naturally decompose into a direct sum of vector spaces indexed by the connected components of the internal edge graph. Therefore, in each connected component of the internal edge graph, the number of edges $e$ with $e \sqcup E$ interior is less than or equal to the number of vertices $v$ with $\{v\} \sqcup E$ of carrier codimension $1$. As the only connected graphs $(V, E)$ where $\vert E \vert \le \vert V \vert$ are either trees or contain a unique cycle, the result follows.
\end{proof}

\begin{lemma}\label{lem:nofourcycle}
Assume $\sigma(E)$ has codimension at least $2$. Let $F \subset \lk_{\Gamma}(E)$ be a face. Assume $F$ has no vertices $v$ with $\{v\} \sqcup E$ interior. If $L(\Gamma, E)|_{F}$ is zero in degree $2$, then no component of the internal edge graph of $F$ contains a cycle of length $4$. 
\end{lemma}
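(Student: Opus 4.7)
The plan is to proceed by contradiction: suppose $L(\Gamma, E)|_F$ vanishes in degree $2$ yet some component of the internal edge graph of $F$ contains a $4$-cycle $u_1 u_2 u_3 u_4$; I will exhibit a nonzero class in $(L(\Gamma, E)|_F)_2$. By Lemma~\ref{lem:intedge} applied with $\Delta = F$, this component has a unique cycle (so the two diagonals $\{u_1,u_3\}$ and $\{u_2,u_4\}$ are \emph{not} edges of the internal edge graph), and every vertex of the component has carrier of codimension exactly $1$ relative to $E$. For each $i$, let $v_{\alpha_i}$ denote the unique vertex in $\sigma(\{u_i\}\sqcup E)^c$.

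The key combinatorial observation is that the non-interiority of the two diagonals forces $\alpha_1=\alpha_3$ and $\alpha_2=\alpha_4$. Indeed, $\sigma(\{u_1,u_3\}\sqcup E) = \{v_{\alpha_1}\}^c\cup\{v_{\alpha_3}\}^c$ equals $V$ unless $\alpha_1=\alpha_3$, so the first equality holds; the second is symmetric. Write $\alpha := \alpha_1=\alpha_3$ and $\beta := \alpha_2=\alpha_4$; these indices differ because the interior cycle edge $\{u_1,u_2\}\sqcup E$ requires $\alpha_1\neq\alpha_2$. Consequently the single l.s.o.p.\ element $\theta_\alpha$ governs both $u_1$ and $u_3$, while $\theta_\beta$ governs both $u_2$ and $u_4$; moreover, since $v_\alpha\notin\sigma(u_1)\cup\sigma(u_3)$ and $v_\beta\notin\sigma(u_2)\cup\sigma(u_4)$, the only cycle-vertex coefficients of $\theta_\alpha|_F$ sit at $u_2,u_4$, and those of $\theta_\beta|_F$ at $u_1,u_3$.

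From the proof of Lemma~\ref{lem:intedge}, $(L(\Gamma,E)|_F)_2$ vanishes precisely when the elements $\theta_j|_F\cdot x^v$ (as $v$ ranges over codimension-$1$ vertices of $F$ with missing vertex $v_j$) span the $k$-vector space $(I|_F)_2$. I project onto the $4$-dimensional subspace $W\subset (I|_F)_2$ spanned by the cycle monomials $x^{u_i}x^{u_{i+1}}$. Any generator $\theta_j|_F\cdot x^v$ with $v\notin\{u_1,u_2,u_3,u_4\}$ is supported on edges incident to $v$, so projects to zero on $W$. Writing $a,b$ for the coefficients of $x^{u_2},x^{u_4}$ in $\theta_\alpha|_F$ and $c,d$ for those of $x^{u_1},x^{u_3}$ in $\theta_\beta|_F$, the projections of the four cycle generators $\theta_{\alpha_i}|_F\cdot x^{u_i}$ form the rows of
\[
M = \begin{pmatrix} a & 0 & 0 & b \\ c & d & 0 & 0 \\ 0 & a & b & 0 \\ 0 & 0 & d & c \end{pmatrix},
\]
which has $\det M = abcd - abcd = 0$. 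Hence the image in $W$ has dimension at most $3$, so some cycle monomial represents a nonzero class in $(L(\Gamma,E)|_F)_2$, the desired contradiction.

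The main obstacle is the bookkeeping that pins down which coefficients appear where in $M$ and that confirms every non-cycle generator projects trivially. Once $M$ is assembled, its two-block circulant form makes $\det M = 0$ immediate, so the combinatorial identification $\alpha_1=\alpha_3,\,\alpha_2=\alpha_4$ is really the heart of the argument.
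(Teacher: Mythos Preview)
Your proof is correct and follows essentially the same approach as the paper. Both arguments invoke Lemma~\ref{lem:intedge} to force opposite vertices of the $4$-cycle to share the same missing vertex, and then exhibit a linear dependence among the four generators $\theta_{\alpha_i}|_F\cdot x^{u_i}$. The only cosmetic differences are that the paper restricts to the $4$-vertex subface $\{u_1,u_2,u_3,u_4\}$ (so no non-cycle generators appear at all) rather than projecting onto $W$ inside $k[F]$, and it phrases the dependence as the commutativity relation $\theta_\alpha\theta_\beta=\theta_\beta\theta_\alpha$ rather than computing $\det M=0$ explicitly; unwinding that relation gives precisely the kernel vector $(c,-a,d,-b)$ of your matrix $M^T$.
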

\begin{proof}
Suppose a component of the internal edge graph contains a $4$-cycle of vertices $F = \{t_1, t_2, u_1, u_2\}$. By Lemma~\ref{lem:intedge}, this is the unique cycle in this component and every vertex $w \in F$ has $\{w\} \sqcup E$ of carrier codimension $1$. Because $F$ is a face and there are no $3$-cycles in this component of the internal edge graph, we may assume that $\sigma(\{t_i\} \sqcup E) = \{v_1\}^c$ and $\sigma(\{u_i\} \sqcup E) = \{v_2\}^c$. Restricting to $F$ and using that $(L(\Gamma, E)|_{F})_2 = 0$, we have that 
$$(x^{\{ t_1, u_1 \}}, x^{ \{ u_1, t_2\}}, x^{\{t_2,u_2\}}, x^{\{u_2, t_1\}}) = (x^{\{t_1\}} \theta_{2}, x^{\{t_2\}} \theta_{2}, x^{\{u_1\}} \theta_{1}, x^{\{u_2\}} \theta_{1}).$$
The relation $\theta_{1} \theta_{2} - \theta_{2} \theta_{1} = 0$ expands into a relation between the generators of the right-hand side. But the left-hand side is $4$-dimensional, a contradiction.
\end{proof}

\begin{lemma}\label{lem:cd1case}
Assume $\sigma(E)$ has codimension $1$. Let $\Delta \subset \lk_{\Gamma}(E)$ be a subcomplex. Then 
$$\dim (L(\Gamma, E)|_{\Delta})_1 \ge |\{v \in \Delta: \{v\} \sqcup E \text{ interior}\}| - 1.$$
\end{lemma}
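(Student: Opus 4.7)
The plan is to read off the statement from the degree-$1$ piece of the restricted right-exact sequence \eqref{e:restrict}, exploiting the fact that the codimension-one hypothesis forces $b=1$ and so only one summand survives in the relation term.

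Since $\sigma(E)$ has codimension one, after reordering we may assume $\sigma(E)^c = \{v_1\}$, so $b = 1$ in the notation of the paper. A vertex $w$ of $\lk_\Gamma(E)$ satisfies $\{w\} \sqcup E$ interior precisely when $v_1 \in \sigma(w)$; write $W$ for the set of such vertices lying in $\Delta$. By \eqref{eq:Ires}, $I|_\Delta$ is generated in degree $1$ by the monomials $x^v$ for $v \in W$, and these are $k$-linearly independent in $k[\Delta]_1$, so $\dim_k (I|_\Delta)_1 = |W|$.

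For each $|S|=1$, the degree-$1$ piece of $I_S|_\Delta[-1]$ equals the degree-$0$ piece $(I_S|_\Delta)_0$, which is $k$ if $1 \in I_S$ and $0$ otherwise. Since $1 \in I_S$ if and only if $\sigma(E)^c \subset S$, the only singleton contributing is $S = \{v_1\}$, and the map to $(I|_\Delta)_1$ sends $1$ to $\theta_1|_\Delta$. In particular its image has $k$-dimension at most one. Feeding this into the exact sequence
$$\bigoplus_{|S|=1} (I_S|_\Delta[-1])_1 \to (I|_\Delta)_1 \to (L(\Gamma,E)|_\Delta)_1 \to 0$$
yields $\dim_k (L(\Gamma,E)|_\Delta)_1 \ge |W| - 1$, which is the desired inequality.

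There is no real obstacle: the whole argument is bookkeeping once the restricted resolution is unpacked in degree $1$. The content of the codimension-one assumption is precisely that $b = 1$, which cuts the number of relations contributing to $(L(\Gamma,E)|_\Delta)_1$ down to at most one.
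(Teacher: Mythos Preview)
Your proof is correct and follows essentially the same approach as the paper: both read off the inequality from the degree-$1$ part of the restricted right-exact sequence \eqref{e:restrict}, using that the codimension-one hypothesis forces $b=1$ so that only the summand $S=\{v_1\}$ survives on the relation side. Your write-up is somewhat more explicit than the paper's in justifying why the other singletons $S=\{v_i\}$ with $i>1$ contribute nothing in degree~$0$ (namely because $I_{\{v_i\}}=I$ and $E$ is not interior), but the underlying argument is identical.
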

\begin{proof}
By considering the degree $1$ part of \eqref{e:restrict}, as the codimension of $\sigma(E)$ is $1$, 
we get the following exact sequence.
\begin{center}
\begin{tikzcd}[column sep = large]
 k \arrow[r] & \bigoplus\limits_{\mathclap{\substack{w \in \Delta \\ \{w\} \sqcup E \text{ interior }}}} k \cdot x^w \arrow[r] &   (L(\Gamma, E)|_{\Delta})_1  \arrow[r] & 0,
\end{tikzcd}
\end{center}
and the result follows. 
\end{proof}

\begin{proof}[Proof of Theorem~\ref{thm:localizedcase}]
We must show that $L(\Gamma, E)|_{F}$ is non-zero in degree $|F_1|$. 
Recall that $L(\Gamma, E)|_{F}$ is isomorphic to $I|_{F}/J|_{F}$, where $I|_{F}$ and $J|_{F}$ are described in \eqref{eq:Ires} and \eqref{eq:Jres} respectively. First we handle the cases when $\vert F_1 \vert \le 1$. If $F_1 = \emptyset$, then $E$ is interior and $x^{\emptyset} = 1$, but $J|_{F}$ is a proper ideal as it is generated by elements of positive degree, so $x^{F_1} \not \in J|_{F}$. If $F_1 = \{v\}$, then we assume that $E$ is not an interior face. Then $J|_{F}$ is generated by elements of degree at least $2$, so $x^{F_1} \not \in J|_{F}$.

Suppose $\vert F_1 \vert = 2$. We assume that there are no vertices $v$ with $\{v\} \sqcup E$ interior and $E$ is not interior. If $\sigma(E)$ has codimension $1$, then both $F_1$ and $F_2$ must have a vertex $v$ with $\{v\} \sqcup E$ interior. Then by Lemma~\ref{lem:cd1case}, we see that $\dim L(\Gamma, E)|_{F} \ge 1$. Hence we may assume that $\sigma(E)$ has codimension at least $2$.

Let $F_1 = \{u, t\}$ and assume that $L(\Gamma, E)|_{F}$ has no non-zero elements in degree $2$. Consider the connected component of the internal edge graph containing $F_1$. By Lemma~\ref{lem:intedge}, we may assume that $\sigma(\{u\} \sqcup E) = \{v_1\}^c$. Note that $v_1 \in \sigma(t)$. There is a vertex $t' \in F_2$ such that $v_1 \in \sigma(t')$, so $\{u, t'\} \sqcup E$ is interior. Therefore either $\{t\} \sqcup E$ or $\{t'\} \sqcup E$ has carrier codimension $1$. 

If $\sigma(\{t\} \sqcup E) = \{v_2\}^c$, then there is a vertex $u' \in F_2$ such that $v_2 \in \sigma(u')$. First assume $u'$ and $t'$ are distinct. Since at least one of $\{u'\} \sqcup E$ and $\{t'\} \sqcup E$ has carrier codimension $1$, it follows that $\{ u', t'\} \sqcup E$ is interior. Then $\{u, t, u', t'\}$ forms a $4$-cycle, contradicting Lemma~\ref{lem:nofourcycle}.

If $u' = t'$, then the internal edge graph contains a cycle and hence every vertex $w$ in it (including $t$) has $\{w\} \sqcup E$ of carrier codimension $1$. As $F_2$ is interior and $\{u'\} \sqcup E$ has carrier codimension $1$, there is a vertex $w \in F_2$ such that $\{u', w\} \sqcup E$ is interior. But then either $\{u, w\} \sqcup E$ or $\{t, w\} \sqcup E$ is interior, contradicting the uniqueness of the cycle in Lemma~\ref{lem:intedge}.

If $\{t\} \sqcup E$ does not have carrier codimension $1$, then we may assume that $\sigma(\{t'\} \sqcup E) = \{v_2\}^c$. Choose a vertex $u' \in F_2$ with $v_2 \in \sigma(u')$. Then $\{t', u'\} \sqcup E$ is interior, so $\{u'\} \sqcup E$ has carrier codimension $1$. If $v_1 \in \sigma(u')$, then $\{u, u'\} \sqcup E$ is interior. If $v_1 \not \in \sigma(u')$, then $\{t, u'\} \sqcup E$ is interior. In either case, there is a cycle and a vertex $v$ with $\{v\} \sqcup E$ of carrier codimension more than $1$ in the internal edge graph, contradicting Lemma~\ref{lem:intedge}. 
\end{proof}

\begin{remark}\label{rem:newnonvanish}
One can use the same overall strategy more generally to show that other combinatorial types of faces cannot appear in triangulations with vanishing local $h$-vectors.  For instance, suppose $V = \{v_1, \ldots, v_6\}$ and $\sigma \colon \Gamma \to 2^V$ is a geometric  triangulation with a facet $F = \{w_1, \ldots, w_6\}$ such that 
\[
\begin{array}{lll}
\sigma(w_1) = \{ v_1\} & \sigma(w_2) = \{ v_2 \} & \sigma(w_3) = \{ v_3 \} \\
\sigma(w_4) = \{ v_1, v_4, v_5 \} & \sigma(w_5) = \{ v_2, v_4, v_6 \} & \sigma(w_6) = \{ v_3, v_5, v_6 \}
\end{array}
\]
Then the interior $2$-faces of $F$ are $\{w_1, w_5, w_6\}$, $\{w_2, w_4, w_6\}$, $\{w_3, w_4, w_5\}$, and $\{w_4, w_5, w_6\}$. But $F$ has no interior vertices or edges, and it has only three edges with carrier codimension one, namely $\{w_4, w_5\}, \{w_4, w_6\},$ and $\{w_5, w_6\}$. Thus $L(\Gamma, \emptyset)|_{F}$ is non-zero in degree three. Note that $F$ is not a pyramid and does not admit an interior partition. 
\end{remark}

\bibliography{resolutions}
\bibliographystyle{amsalpha}

\end{document}